\newtheorem{theorem}{Theorem}[section]
\newtheorem{lemma}[theorem]{Lemma}
\newtheorem{proposition}[theorem]{Proposition}
\theoremstyle{definition}
\theoremstyle{remark}
\numberwithin{equation}{section}
\newcommand{\R}{\mathbb{R}}
\newcommand{\G}{\mathcal{G}}
\let\P\undefined
\newcommand{\P}{\mathcal{P}}
\let\H\undefined
\newcommand{\H}{\mathcal{H}}
\let\L\undefined
\newcommand{\L}{\mathcal{L}}
\renewcommand*{\eqref}[1]{%
  \hyperref[{#1}]{\textup{\tagform@{\ref*{#1}}}}%
}
\title[Hardy's Inequality for the fractional powers of Grushin operator]
{Hardy's Inequality for the fractional powers of the Grushin operator}
\author[R. Balhara]{Rakesh Balhara}
\address[R. Balhara]{Department of Mathematics\\
 Indian Institute of Science\\
560 012 Bangalore, India}
\email{rakeshbalhara@gmail.com}
\keywords{fractional Grushin operator, fractional generalized sublaplacian, Hardy's Inequality, ground state representation, Hecke-Bochner formula}
\subjclass[2010]{Primary: 35A23. Secondary: 26A33, 26D10, 42B37, 42C10, 47A63}
\thanks{Author is financially supported by UGC-CSIR. Also he wants to thank his guide Prof S. Thangavelu for his continuous help and suggestions.}
\begin{document}

\maketitle
\begin{abstract}
We prove Hardy's inequality for the fractional powers of the generalized sublaplacian and the fractional powers of the Grushin operator. We also find an integral representation and a ground state representation for the fractional powers of generalized sublaplacian.
\end{abstract}

\section{Introduction and main results}

The study of various kinds of inequalities for various differential operators are important in understanding many practical problems in physics. Moreover, sharpness of the constants involved in these inequalities is directly related to the existence and nonexistence results for certain partial differential equations.\\

The well known Hardy's inequality for continuously differentiable functions on $\R^{n}$ ($n\geqslant 3$)is given by 
\begin{equation}\label{classical_hardy}
\int_{\R^n}|\nabla f|^2\,dx \geqslant c_n\int_{\R^n}\frac{|f(x)|^2}{|x|^2}\,dx,
\end{equation}
where $\nabla$ is the standard gradient operator on $\R^n$.
Moreover, sharp value of the constant $c_n$, involved in the inequality is known to be equal to $\frac{(n-2)^2}{4}$. By sharp value of the constant, we mean that the inequality will not hold true if we take value of $c_n>\frac{(n-2)^2}{4}$.\\ 

We recall that the classical Laplacian $\Delta$ on $\R^n$ is defined by $\Delta = -\sum_{j=1}^n \frac{\partial^2}{\partial x_j^2}$. For $f\in L^2(\R^n)$ such that $\Delta f\in L^2(\R^n)$, the inequality \eqref{classical_hardy} can be shown equivalent to the inequality:
\begin{equation}
\langle \Delta f,f\rangle\geqslant \frac{(n-2)^2}{4} \int_\R\frac{|f(x)|^2}{|x|^2}\,dx.
\end{equation}  

Hardy's inequality has been generalized for fractional powers of Laplacian. Recall that fractional powers of Laplacian $\Delta^s$, for $s>0$ is defined via spectral decomposition (or Fourier transform) as
\begin{equation}
\Delta^s f(x) = \frac{1}{(2\pi)^n}\int_{\R^n}|\xi|^{2s}\widehat{f}(\xi)e^{ix\cdot\xi}\,d\xi,
\end{equation}
where $\widehat{f}(\xi)$ is the Fourier transform defined by $\int_{\R^n} f(x)e^{-ix\cdot\xi}\,dx$. With this definition, we state the Hardy's inequality for the fractional powers of Laplacian. For $f\in L^2(\R^n)$ such that $\Delta^sf\in L^2(\R^n)$, we have for $0<s<1$
\begin{equation}\label{fractional_Hardy}
\langle \Delta^s f,f\rangle \geqslant 4^s\left(\frac{\Gamma(\frac{n+2s}{4})}{\Gamma(\frac{n-2s}{4})}\right)^2\int_{\R^n}\frac{|f(x)|^2}{|x|^{2s}}\,dx.
\end{equation}
Though the constant involved in the inequality is sharp, the equality is never achieved for any non zero function. Hardy's inequality for the fractional powers of Laplacian has been extensively studied in literature. We refer to \cite{beckner}\cite{franklieb}\cite{herbst}\cite{yafaev} for more details.\\

On the other hand, there is another version of Hardy's inequality for fractional powers of Laplacian, where the homogeneous weight $|x|^{2s}$ has been replaced by a non-homogeneous weight $(\delta+|x|^2)^{2s}$, $\delta>0$:
\begin{equation}\label{hardyTypeLaplacian}
\langle \Delta^{s}f,f\rangle\geqslant (4\delta)^s\left(\frac{\Gamma(\frac{n+2s}{2})}{\Gamma(\frac{n-2s}{2})} \right)^2\int_{\R^n}\frac{|f(x)|^2}{(\delta+|x|^2)^{2s}}\,dx.
\end{equation}
The constant in the inequality is sharp and equality is achieved for $f(x) = (\delta+|x|^2)^{-(n-2s)/2}$. Though the inequality \eqref{hardyTypeLaplacian} is well known, we are unable to find a reference where this inequality is actually proved.\\

In this article, we are interested in proving a similar inequality for the fractional powers of Grushin operator. Recall that the Grushin operator $\G$ on $\R^{n+1}$ is defined by 
\begin{equation}
\G = -\frac{1}{2}\left(\sum_{j=1}^n \frac{\partial^2}{\partial x_j^2}+|x|^2\frac{\partial^2}{\partial w^2}\right).
\end{equation}
Using the spectral decomposition, we can define fractional powers of Grushin operator $\G^s$ for any $s>0$ by 
\begin{equation}
\G^s f(x,w) = \frac{1}{2\pi}\int_{\R}\sum_{k=0}^\infty ((k+n/2)|\lambda|)^s \P_k(\lambda)f^\lambda(x)e^{-i\lambda w}\,d\lambda,
\end{equation}
where $\P_k(\lambda)$ are the orthogonal projections of $L^2(\R^n)$ onto the eigenspaces $E_k^\lambda$ corresponding to the eigenvalues $(2k+n)|\lambda|$ of the scaled Hermite operator $\H(\lambda)$ defined on $\R^n$ as 
\begin{equation}
\H(\lambda) = -\sum_{j=1}^n\frac{\partial^2}{\partial x_j^2}+\lambda^2|x|^2,
\end{equation}
and $f^\lambda$ is the inverse Fourier transform of $f$ in the the last variable, that is
\begin{equation}
f^\lambda(x) = \int_{\R}f(x,w)e^{iw\lambda}\,dw.
\end{equation}
However, it is convenient to work with the following modified fractional powers:
\begin{equation}
\G_s f(x,w)= \frac{1}{2\pi}\int_\R\sum_{k=0}^\infty (2|\lambda|)^s\frac{\Gamma(\frac{2k+n}{4}+\frac{1+s}{2})}{\Gamma(\frac{2k+n}{4}+\frac{1-s}{2})} \P_k(\lambda)f^\lambda(x)e^{-i\lambda w}\,d\lambda.
\end{equation}
Notice that $\G^s$ differs from $\G_s$ by a bounded operator, that is there exists a bounded operator $V_s$ such that $\G^s = V_s \G_s$, which justifies the proving of Hardy type inequality for $\G_s$.\\

We denote by $W^{s,2}(\R^{n+1})$, the Sobolev space consisting of all $L^2(\R^{n+1})$ functions such that $\G_s f\in L^2(\R^{n+1})$. The main theorem that we will prove in this article is the following:
\begin{theorem}\label{hardyInequalityGrushin}
For $f\in W^{s,2}(\R^{n+1})$, $0<s<1$ and $\delta>0$, we have
\[\langle\G_s f,f\rangle\geqslant(4\delta)^s\left(\frac{\Gamma(\frac{n/2+s+1}{2})}{\Gamma(\frac{n/2-s+1}{2})}\right)^2\int_{\R^{n+1}} \frac{|f(x,w)|^2}{\left(\left(\delta+\frac{|x|^2}{2}\right)^2+w^2\right)^s}\,dx\,dw.\] Also the constant in the inequality is sharp and equality is achieved for \[f(x,w) = \left(\left(\delta+\frac{|x|^2}{2}\right)^2+w^2\right)^{-\frac{n/2-s+1}{2}}.\] 
\end{theorem}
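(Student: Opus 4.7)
The strategy is to reduce the theorem to the corresponding Hardy inequality for the fractional generalized sublaplacian (established earlier in the paper) via the Hecke--Bochner formula and Plancherel in the central variable $w$.

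First I would take the partial Fourier transform in $w$ and use Plancherel to write
\[
\langle\G_s f,f\rangle
= \frac{1}{2\pi}\int_\R\sum_{k=0}^\infty (2|\lambda|)^s\frac{\Gamma\bigl(\tfrac{2k+n}{4}+\tfrac{1+s}{2}\bigr)}{\Gamma\bigl(\tfrac{2k+n}{4}+\tfrac{1-s}{2}\bigr)}\,\langle\P_k(\lambda)f^\lambda,f^\lambda\rangle_{L^2(\R^n)}\,d\lambda.
\]
Next I would apply the Hecke--Bochner formula to each Hermite eigenspace $E_k^\lambda$: every element is a sum of pieces of the form $P(x)\varphi_k^{\lambda,m}(|x|)$, where $P$ is a solid harmonic of degree $m$ and $\varphi_k^{\lambda,m}$ is a Laguerre-type function associated with the parameter $\alpha=m+n/2-1$. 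Plugging such a piece against the spectral projector identifies, piece by piece, the action of $\G_s$ with the action of a fractional generalized sublaplacian $\L_\alpha^{s}$ on the radial-in-$x$ profile coupled with $w$. The ratio of gammas in the definition of $\G_s$ is engineered precisely so that $(2|\lambda|)^s\,\Gamma(\cdots)/\Gamma(\cdots)$ coincides with the $\L_\alpha^{s}$ multiplier on the $(k,m)$-component.

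Having reduced matters to $\L_\alpha^{s}$, I would invoke the Hardy inequality for the fractional generalized sublaplacian, proved earlier in the paper via the integral representation and the ground state method, with the weight $\bigl((\delta+r^2/2)^2+w^2\bigr)^{-s}$. Summing over the Hecke--Bochner components and integrating in $\lambda$ reassembles the weight on the right-hand side of the statement. The constant $(4\delta)^s\bigl(\Gamma(\tfrac{n/2+s+1}{2})/\Gamma(\tfrac{n/2-s+1}{2})\bigr)^2$ is the one arising from the lowest component $k=m=0$; all higher components produce strictly larger constants, so the bound is uniform.

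For sharpness, the proposed extremal $f(x,w)=\bigl((\delta+|x|^2/2)^2+w^2\bigr)^{-(n/2-s+1)/2}$ is radial in $x$ and lies entirely in the $k=m=0$ component, where the generalized sublaplacian Hardy inequality is attained by the analogous radial ground state. Thus equality in the $\L_\alpha^{s}$ inequality at this level transfers to equality in the Grushin inequality. The main obstacle will be the precise spectral bookkeeping: verifying the multiplier identification across the Hecke--Bochner decomposition and showing that the $k=m=0$ constant is indeed the smallest, so that the single inequality for $\L_{n/2-1}^{s}$ controls all higher components.
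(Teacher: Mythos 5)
Your proposal follows essentially the same route as the paper: decompose $f$ via spherical harmonics, use the Hecke--Bochner formula to identify $\langle\G_s(gP),gP\rangle$ with $\langle\L_s g,g\rangle$ for the generalized sublaplacian with parameter $\alpha=n/2+m-1$, apply Theorem \ref{hardygeneralizedSublaplacian} componentwise, take the infimum of the constants over $m$ (attained at $m=0$ since $\Gamma(a+\tfrac{s}{2})/\Gamma(a-\tfrac{s}{2})$ increases in $a$), and obtain sharpness from the radial extremizer living in the $m=0$ component. The "spectral bookkeeping" you flag as the main obstacle is exactly what the paper carries out, and it goes through as you anticipate.
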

In order to prove the above theorem, we prove an analogous theorem for the fractional powers of generalized sublaplacian. We define generalized sublaplacian $\L$ on $\R^+\times \R$ for $\alpha >-1/2$ by
\begin{equation}
\L = -\frac{1}{2}\left(\frac{\partial^2}{\partial x^2}+\frac{2\alpha+1}{x}\frac{\partial}{\partial x}+x^2\frac{\partial^2}{\partial w^2}\right).
\end{equation}
Using the spectral decomposition, we define fractional powers of generalized sublaplacian $\L^s$ for $s>0$ as
\begin{equation}
\L^s f(x,w) = \frac{1}{\pi\Gamma(\alpha+1)}\int_\R\sum\limits_{k=0}^\infty  \left((|\lambda|(2k+\alpha+1))^s \widehat{f}(\lambda, k)\phi^\alpha_{k,\lambda}(x)\right)|\lambda|^{\alpha+1}e^{-i\lambda w}\,d\lambda,
\end{equation} 
where $\widehat{f}(\lambda,k)$ is the Laguerre transform defined as \[\widehat{f}(\lambda,k) = \frac{\Gamma(\alpha+1)\Gamma(k+1)}{\Gamma(\alpha+k+1)}\int_0^\infty\left(\int_R f(x,w)e^{i\lambda w}\,dw\right)\phi^\alpha_{k,\lambda}(x)\,x^{2\alpha+1}\,dx,\] and $\phi^\alpha_{k,\lambda}$ defined as \[\phi_{k,\lambda}^\alpha(x) = L_{k}^\alpha(|\lambda|x^2)e^{-\frac{1}{2}|\lambda|x^2},\] with $L_{k}^\alpha$ denoting the Laguerre polynomials of order $\alpha$.
However, it is convenient to work with the following modified fractional powers of $\L$. For $s>0$, we define $\L_s$ by 
\begin{equation}
\L_sf(x,w) = \frac{1}{\pi\Gamma(\alpha+1)}\int_\R\sum\limits_{k=0}^\infty  \left((2|\lambda|)^s\frac{\Gamma(\frac{2k+\alpha+1}{2}+\frac{1+s}{2})}{\Gamma(\frac{2k+\alpha+1}{2}+\frac{1-s}{2})} \widehat{f}(\lambda, k)\phi^\alpha_{k,\lambda}(x)\right)|\lambda|^{\alpha+1}e^{-i\lambda w}\,d\lambda.
\end{equation}
Again, we denote by $W^{s,2}(\R^+\times\R)$, the space consisting of all functions $f$ such that both $f$ and $\L_s f$ belong to $L^2(\R^+\times\R)$. We will prove the following Hardy type inequality for $\L_s$.
\begin{theorem}\label{hardygeneralizedSublaplacian}
For $f\in W^{s,2}(\R^+\times\R)$, $0<s<1$ and $\delta>0$, we have  \[\langle\L_s f, f\rangle\geqslant(4\delta)^s\left(\frac{\Gamma(\frac{\alpha+s+2}{2})}{\Gamma(\frac{\alpha-s+2}{2})}\right)^2\int_{\R^+}\int_\R \frac{|f(x,w)|^2}{\left(\left(\delta+\frac{x^2}{2}\right)^2+w^2\right)^s}\,dw\,dx.\] Moreover, the constant in the inequality is sharp and equality is achieved for \[f(x,w) = ((\delta + x^2/2)^2+w^2)^{-\frac{\alpha-s+2}{2}}.\]
\end{theorem}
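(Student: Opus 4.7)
The plan is to prove the theorem via the Frank--Lieb--Seiringer ground-state representation technique, adapted to the generalized sublaplacian. Three ingredients are required: (i) an integral representation of $\L_s$ against a positive symmetric kernel; (ii) the explicit eigenvalue identity
\[\L_s h(x,w) = (4\delta)^s\Bigl(\tfrac{\Gamma((\alpha+s+2)/2)}{\Gamma((\alpha-s+2)/2)}\Bigr)^2 \bigl((\delta+x^2/2)^2+w^2\bigr)^{-s}\, h(x,w)\]
for the candidate ground state $h(x,w) = ((\delta+x^2/2)^2+w^2)^{-(\alpha-s+2)/2}$; and (iii) an algebraic ``completion of the square'' combining (i) and (ii).

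For (i), I would rewrite the Gamma-ratio multiplier of $\L_s$ via a Bochner-type subordination identity of the form
\[\frac{\Gamma(a+\frac{1+s}{2})}{\Gamma(a+\frac{1-s}{2})} = c_s\int_0^\infty\bigl(1-e^{-2at}\bigr)\, \rho_s(t)\, dt,\qquad a = \tfrac{2k+\alpha+1}{2},\]
with an explicit positive density $\rho_s$. Substituting the Laguerre--Mehler heat kernel for $e^{-t\L}$ on $\R^+\times \R$ and interchanging the $t$-integral with the spectral sum yields
\[\L_s f(x,w) = \mathrm{P.V.}\iint\bigl(f(x,w)-f(y,\tau)\bigr)\, K_s\bigl((x,w),(y,\tau)\bigr)\, d\mu(y,\tau)\]
with $K_s\geqslant 0$, where $d\mu$ is the natural measure $y^{2\alpha+1}\, dy\, d\tau$.

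Step (ii) is the main technical step. The $w$-Fourier transform of $((\delta+x^2/2)^2+w^2)^{-\beta/2}$ is a modified Bessel function $K_{(\beta-1)/2}\bigl((\delta+x^2/2)|\lambda|\bigr)$ up to explicit factors, and the subsequent $x$-integral against $\phi_{k,\lambda}^\alpha$ can be evaluated using the Laguerre generating function $\sum_k L_k^\alpha(r) z^k = (1-z)^{-\alpha-1}\exp(-rz/(1-z))$ together with standard Hankel--Nicholson type integrals of Laguerre polynomials against Bessel functions. Matching the resulting Gamma-function ratios on both sides, via the duplication formula and a quadratic ${}_2F_1$ transformation, should produce the identity with exactly the constant stated. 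I expect this matching to be the hardest part of the proof: it is where the sharp value of the constant is pinned down, and the special-function bookkeeping must be carried out without accumulating spurious factors.

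With (i) and (ii) in hand, (iii) is essentially algebraic. Writing $f = g\, h$ and using polarization against the positive symmetric kernel, one obtains the ground-state representation
\[\langle \L_s f, f\rangle - (4\delta)^s\Bigl(\tfrac{\Gamma((\alpha+s+2)/2)}{\Gamma((\alpha-s+2)/2)}\Bigr)^2\!\!\int\!\frac{|f|^2}{((\delta+x^2/2)^2+w^2)^s}\, d\mu = \tfrac12\!\iint|g(x,w)-g(y,\tau)|^2\, h(x,w)h(y,\tau)\, K_s\, d\mu\, d\mu.\]
The right-hand side is manifestly non-negative, proving the inequality. It vanishes iff $g$ is constant, which yields both the sharpness of the constant and the identification of $h$ (up to scalar multiples) as the unique extremizer.
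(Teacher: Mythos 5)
Your overall architecture---integral representation against a positive kernel, the eigenvalue identity for the candidate ground state, and the completion-of-the-square---is exactly the route the paper takes (following Frank--Lieb--Seiringer and Roncal--Thangavelu), and your step (ii), though phrased via Bessel--$K$ functions and Hankel-type integrals rather than the paper's Cowling--Haagerup Laplace-transform identity for $L(a,b,c)$, is a plausible alternative path to the same Proposition giving $\L_s u_{-s,\delta}=A_{\alpha,s}\,u_{s,\delta}$.

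The genuine gap is in step (i): the positivity of the kernel does not follow from the subordination formula in the way you assert. The density in the relevant identity, $(\mu+1-s)\int_0^\infty e^{-(\mu+1)t}(\sinh t)^{-s}\,dt = 2^s\Gamma(1-s)\,\Gamma(\tfrac{\mu}{2}+\tfrac{1+s}{2})/\Gamma(\tfrac{\mu}{2}+\tfrac{1-s}{2})$, is \emph{not} homogeneous in $t$, so to reconcile it with the eigenvalue $|\lambda|(2k+\alpha+1)$ one must rescale $t\to|\lambda|t$; this entangles $\lambda$ with $t$ and produces not the heat kernel $h_t$ but the modified kernel $K_{t,s}$ with $K_{t,s}^\lambda(x)=h_t^\lambda(x)\bigl(\tfrac{\lambda t}{\sinh\lambda t}\bigr)^{s+1}$. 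The extra Fourier multiplier $\bigl(\tfrac{\lambda t}{\sinh\lambda t}\bigr)^{s+1}$ in the central variable destroys the automatic positivity you would get from ``positive density composed with a positivity-preserving semigroup'': $K_{t,s}(x,w)$ need not be nonnegative for fixed $t$. The paper only recovers positivity of the $t$-integrated kernel $K_s=\int_0^\infty K_{t,s}\,t^{-s-1}\,dt$ through an explicit and fairly long computation (reducing to associated Legendre functions) showing $K_s(x,w)=c_{\alpha,s}(x^4+4w^2)^{-\frac{\alpha+s+2}{2}}>0$, and it must additionally invoke Stempak's theorem that the generalized translation $T^\eta$ is a positivity-preserving operator, together with the evenness of $K_s$ in $w$ and the adjoint formula for $T^\eta$, to get both the nonnegativity and the symmetry of the two-point kernel $T^\eta K_s(\xi)$ that your polarization step silently uses (there is no group structure here, so ``symmetric kernel $K_s(\xi,\eta)$'' is itself something to prove). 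Until these points are supplied, the manifest nonnegativity of the right-hand side of your ground-state representation is unjustified. A smaller omission: to conclude sharpness you must check that $u_{-s,\delta}$ actually lies in $W^{s,2}$ and that both sides are finite for it (the paper verifies equality directly from the eigenvalue identity); the ``uniqueness of the extremizer'' you claim would further require strict positivity of $T^\eta K_s(\xi)$, which the paper does not assert.
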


We outline the contents of this paper. In Section 2, we give preliminaries, definitions and facts concerning Laguerre expansions, fractional powers of sublaplacian, fractional powers of Grushin, spherical harmonics and Hecke Bochner formula. In Section 3, we will prove the Hardy type inequality for the fractional powers of sublaplacian. Integral representation and ground state representation for the fractional powers of sublaplacian are also calculated in this Section. In Section 4, Hardy type inequality for the fractional powers of Grushin will be proved. 

\section{Preliminaries}

\subsection{Laguerre expansions on $\R^+$}
Let $\alpha>-1/2$. We equip $Y = \mathbb{R}^+$ with the measure $x^{2\alpha+1}\,dx$, where $dx$ is the standard Lesbegue measure. For $\lambda\in\R\setminus\{0\}$ and $k =0,1,2,\cdots$, we define \textit{Laguerre functions} $\phi_{k,\lambda}^\alpha$ by
\begin{equation}
\phi_{k,\lambda}^\alpha(x) = L_{k}^\alpha(|\lambda|x^2)e^{-\frac{1}{2}|\lambda|x^2},
\end{equation}
where $L_{k}^\alpha$ are the Laguerre polynomials of type $\alpha$. We also define
\begin{equation}
\tilde{\phi}_{k,\lambda}^\alpha(x)=\left(\frac{2\Gamma(k+1)|\lambda|^{\alpha+1}}{\Gamma(\alpha+k+1)}\right)^\frac{1}{2}\phi_{k,\lambda}^\alpha(x).
\end{equation} 

\begin{proposition}\label{orthogonality_of_Laguerre_functions}
For $\lambda\neq 0$, the collection $\{\tilde{\phi}_{k,\lambda}^\alpha(x)\}_{k=0}^\infty$ forms an orthonormal basis for $L^2(Y, x^{2\alpha+1}dx)$.
\end{proposition}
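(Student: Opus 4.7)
The plan is to reduce this to the well-known orthogonality and completeness of the Laguerre polynomials $\{L_k^\alpha\}_{k\geqslant 0}$ in $L^2((0,\infty), t^\alpha e^{-t}\,dt)$. The classical relation reads
\[
\int_0^\infty L_k^\alpha(t)\, L_m^\alpha(t)\, t^\alpha e^{-t}\, dt = \frac{\Gamma(\alpha+k+1)}{\Gamma(k+1)}\,\delta_{km},
\]
and the normalizing constant appearing in the definition of $\tilde{\phi}_{k,\lambda}^\alpha$ is chosen precisely so as to cancel this right-hand side after a change of variable, so orthonormality should drop out of a bare-hands computation.

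For orthonormality I would substitute $t = |\lambda|x^2$, so that $x^{2\alpha+1}\,dx = \frac{1}{2|\lambda|^{\alpha+1}}\, t^\alpha\, dt$ and $e^{-|\lambda|x^2} = e^{-t}$. Plugging this into $\int_0^\infty \tilde{\phi}_{k,\lambda}^\alpha(x)\, \tilde{\phi}_{m,\lambda}^\alpha(x)\, x^{2\alpha+1}\, dx$ and pulling the normalizing prefactors outside, the factors of $2$ and $|\lambda|^{\alpha+1}$ cancel against those in the measure, leaving the scalar $\left(\frac{\Gamma(k+1)\Gamma(m+1)}{\Gamma(\alpha+k+1)\Gamma(\alpha+m+1)}\right)^{1/2}$ multiplied by $\int_0^\infty L_k^\alpha(t)L_m^\alpha(t)\, t^\alpha e^{-t}\,dt$. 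Invoking the classical Laguerre identity displayed above produces exactly $\delta_{km}$, which is the desired conclusion.

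For completeness, suppose $g \in L^2(Y, x^{2\alpha+1}dx)$ is orthogonal to every $\tilde{\phi}_{k,\lambda}^\alpha$. The same substitution $t = |\lambda|x^2$ produces a function $G \in L^2((0,\infty), t^\alpha dt)$ orthogonal to each $L_k^\alpha(t)\,e^{-t/2}$ against the measure $t^\alpha\,dt$. Equivalently, $G(t)\,e^{-t/2}\,t^\alpha$ annihilates every polynomial in $t$ against Lebesgue measure on $(0,\infty)$; by the classical density of polynomials in $L^2((0,\infty), t^\alpha e^{-t}\,dt)$ (or equivalently determinacy of the Laguerre moment problem, which holds since $t^\alpha e^{-t}$ decays superexponentially), one concludes $G \equiv 0$, and hence $g \equiv 0$. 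The only nontrivial ingredient is this completeness fact, which I would simply cite from a standard orthogonal polynomial reference such as Szego's monograph; the orthonormality computation itself is routine bookkeeping with the Gamma functions.
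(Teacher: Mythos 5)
Your argument is correct: the substitution $t=|\lambda|x^2$ turns the weight $x^{2\alpha+1}\,dx$ into $\tfrac{1}{2|\lambda|^{\alpha+1}}t^{\alpha}\,dt$, the normalizing constant in $\tilde{\phi}_{k,\lambda}^{\alpha}$ exactly cancels $\Gamma(\alpha+k+1)/\Gamma(k+1)$, and completeness follows since $G(t)e^{t/2}$ lies in $L^2(t^{\alpha}e^{-t}\,dt)$ and is orthogonal there to all polynomials, which are dense because the weight decays exponentially. The paper gives no proof of its own --- it simply cites Thangavelu's monograph (Proposition 2.4.2) --- and your computation is precisely the standard argument that reference contains, so nothing further is needed.
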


For proof see \cite{thangavelu3} [Proposition 2.4.2].\\

For $x\in Y$, define \textit{Laguerre translation} $T^x_{\alpha,\lambda}$ for functions on $Y$ by 
\begin{equation}
T^x_{\alpha,\lambda}f(y) = \frac{\Gamma(\alpha+1)2^\alpha}{\sqrt{2\pi}}\int_0^\pi f((x^2+y^2+2xy\cos(\theta))^\frac{1}{2})j_{\alpha-\frac{1}{2}}(|\lambda|xy\sin\theta)\sin^{2\alpha}\theta\,d\theta,
\end{equation}
where $j_\alpha(t) = J_\alpha(t)t^{-\alpha}$. Here $J_\alpha$ is the Bessel functions of order $\alpha$. Though the definition of Laguerre translation is quite complicated, but its action on Laguerre functions is simple enough.

\begin{proposition}\label{translation_of_Laguerre_functions}
We have \[T^y_{\alpha,\lambda}\phi^\alpha_{k,\lambda}(x) = \frac{\Gamma(\alpha+1)\Gamma(k+1)}{\Gamma(\alpha+k+1)}\phi^\alpha_{k,\lambda}(y)\phi^\alpha_{k,\lambda}(x).\]
\end{proposition}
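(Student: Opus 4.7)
The plan is to establish this product formula via a generating-function reduction, turning the statement into a single scalar identity between the Laguerre translation of a Gaussian and a modified Bessel function, where the two sides are matched using the Hille–Hardy bilinear formula for Laguerre polynomials.

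First I would encode the claim as a power-series identity in an auxiliary parameter $r\in(-1,1)$. Setting $c_r = |\lambda|(1+r)/(2(1-r))$, the ordinary generating function for the Laguerre functions has the closed form
$$G_r(x) := \sum_{k=0}^\infty r^k\,\phi^\alpha_{k,\lambda}(x) = (1-r)^{-\alpha-1}\exp(-c_r x^2),$$
while the Hille–Hardy identity
$$\sum_{k=0}^\infty \frac{k!}{\Gamma(k+\alpha+1)}\, L_k^\alpha(a)\, L_k^\alpha(b)\, r^k = \frac{1}{(1-r)(abr)^{\alpha/2}}\, e^{-(a+b)r/(1-r)}\, I_\alpha\!\left(\frac{2\sqrt{abr}}{1-r}\right)$$
(applied with $a=|\lambda|x^2$, $b=|\lambda|y^2$) supplies a closed form for $\sum_k r^k\,\tfrac{\Gamma(\alpha+1)\Gamma(k+1)}{\Gamma(\alpha+k+1)}\,\phi^\alpha_{k,\lambda}(x)\,\phi^\alpha_{k,\lambda}(y)$. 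Multiplying the claimed identity by $r^k$ and summing, the problem reduces to the single scalar identity
$$T^y_{\alpha,\lambda}\bigl[e^{-c_r(\cdot)^2}\bigr](x) = \Gamma(\alpha+1)\,\frac{(1-r)^\alpha}{(|\lambda|xy\sqrt{r})^\alpha}\,e^{-c_r(x^2+y^2)}\,I_\alpha\!\left(\frac{2|\lambda|xy\sqrt{r}}{1-r}\right).$$
Termwise application of $T^y_{\alpha,\lambda}$ is justified by uniform convergence of $G_r$ for $|r|$ small and $(x,y)$ in a compact set; matching coefficients of $r^k$ on the two sides then recovers the product formula.

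The crux is the evaluation of $T^y_{\alpha,\lambda}[e^{-c_r(\cdot)^2}](x)$. Pulling out the $\theta$-independent part of the Gaussian from the defining integral leaves
$$\frac{\Gamma(\alpha+1)2^\alpha}{\sqrt{2\pi}}\,e^{-c_r(x^2+y^2)}\int_0^\pi e^{-2c_r xy\cos\theta}\, j_{\alpha-\frac12}(|\lambda|xy\sin\theta)\,\sin^{2\alpha}\theta\,d\theta.$$
I would expand $j_{\alpha-1/2}$ in its power series and apply the Poisson representation $\int_0^\pi e^{\pm c\cos\theta}\sin^{2\mu}\theta\,d\theta = \sqrt{\pi}\,\Gamma(\mu+\tfrac12)(2/c)^\mu I_\mu(c)$ termwise. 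The resulting series of modified Bessel functions can then be resummed using the multiplication theorem for $I_\nu$, and the elementary algebraic identity $4c_r^2 - |\lambda|^2 = 4|\lambda|^2 r/(1-r)^2$ is exactly what is needed to match $xy\sqrt{4c_r^2-|\lambda|^2}$ with the argument $2|\lambda|xy\sqrt{r}/(1-r)$ of the $I_\alpha$ on the right of the Hille–Hardy closed form.

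The main obstacle I anticipate is this last resummation, where a double series of $I_{\alpha-1/2+m+1/2}(2c_r xy)$ must be reorganised into a single $I_\alpha$ with the correct argument; the bookkeeping of Gamma factors and powers of $r$ is delicate. A cleaner alternative available in the half-integer case $\alpha = n/2-1$ is to realise $\phi^\alpha_{k,\lambda}(|\cdot|)$ as a radial eigenfunction of the Hermite operator $\H(\lambda)$ on $\R^n$ and to identify $T^y_{\alpha,\lambda}$ with a twisted spherical average; the product formula then follows immediately from the Hecke–Bochner-type decomposition invoked elsewhere in the paper, and the general case is obtained by analytic continuation in $\alpha>-1/2$.
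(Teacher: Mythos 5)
The paper offers no proof of this proposition at all --- it simply cites \cite{thangavelu1}[Theorem 6.1.2] --- so any argument you give is necessarily ``a different route,'' and yours is a correct, essentially self-contained reconstruction of the classical Watson--Hardy proof of the product formula. The constants check out: with $c_r=|\lambda|(1+r)/(2(1-r))$ one indeed has $4c_r^2-|\lambda|^2=4|\lambda|^2r/(1-r)^2$, and the scalar identity you reduce to holds with exactly the normalization $\Gamma(\alpha+1)(1-r)^{\alpha}(|\lambda|xy\sqrt{r})^{-\alpha}e^{-c_r(x^2+y^2)}I_\alpha\bigl(2|\lambda|xy\sqrt{r}/(1-r)\bigr)$, which matches the Hille--Hardy closed form after multiplying by $(1-r)^{-\alpha-1}$. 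The resummation you worry about is less delicate than you fear: writing $j_{\alpha-\frac12}(|\lambda|xy\sin\theta)\sin^{2\alpha}\theta=(|\lambda|xy)^{-\alpha+\frac12}J_{\alpha-\frac12}(|\lambda|xy\sin\theta)\sin^{\alpha+\frac12}\theta$, the whole $\theta$-integral is evaluated in one stroke by Gegenbauer's formula
\begin{equation*}
\int_0^\pi e^{w\cos\theta}J_\mu(\zeta\sin\theta)\sin^{\mu+1}\theta\,d\theta=\sqrt{2\pi}\,\zeta^{\mu}\,(w^2-\zeta^2)^{-\frac{\mu}{2}-\frac14}\,I_{\mu+\frac12}\bigl(\sqrt{w^2-\zeta^2}\bigr),\qquad w^2>\zeta^2,
\end{equation*}
with $\mu=\alpha-\tfrac12$, $w=-2c_rxy$, $\zeta=|\lambda|xy$, so that $w^2-\zeta^2=4|\lambda|^2r(xy)^2/(1-r)^2$; alternatively, your termwise route produces only a single series in $m$ and one application of the multiplication theorem with $\kappa^2-1=-|\lambda|^2/(4c_r^2)$, after which the Gamma factors cancel cleanly against $\Gamma(\alpha+m+\tfrac12)$ from the Poisson integral. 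Two points to keep honest: (i) the Poisson representation and the interchanges of sum and integral need $\alpha>-\tfrac12$, which is the standing hypothesis, and the coefficient extraction in $r$ is legitimate because $z^{-\alpha}I_\alpha(z)$ is even and entire, so both sides are analytic in $r$ near $0$; (ii) your ``cleaner alternative'' via Hecke--Bochner only reaches $\alpha=n/2-1$, i.e.\ the arithmetic progression $-\tfrac12,0,\tfrac12,1,\dots$, which has no finite accumulation point, so the continuation to all $\alpha>-\tfrac12$ requires Carlson's theorem together with a growth estimate in $\alpha$ rather than bare analyticity --- the generating-function argument avoids this issue entirely and is the better of your two routes.
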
 

For proof we refer to \cite{thangavelu1}[Theorem 6.1.2].\\

Using Laguerre translation, we define \textit{Laguerre convolution} $f\ast_\lambda g$ for functions $f,g\in L^1(Y,x^{2\alpha+1}dx)$ as 
\begin{equation}
f\ast_\lambda g(x) = \int_0^\infty T^y_{\alpha,\lambda} f(x)g(y)y^{2\alpha+1}\,dy.
\end{equation}

We quickly recall Hilbert space theory for $L^2(Y,x^{2\alpha+1}\,dx)$. Since $\{\tilde{\phi}_{k,\lambda}^\alpha\}_{k=0}^\infty$ forms an orthonormal basis for $L^2(Y,x^{2\alpha+1}\,dx)$, we have $f = \sum_{k=0}^\infty \langle f,\tilde{\phi}_{k,\lambda}^\alpha\rangle\tilde{\phi}_{k,\lambda}^\alpha$ in $L^2$ norm. 
So, for $k = 0,1,2,\dots$, if we also define \textit{Laguerre coefficients} $\hat{f}(k)$ for the function $f\in L^2(Y,x^{2\alpha+1}\,dx)$ by
\begin{equation}
\hat{f}(k) = \frac{\Gamma(\alpha+1)\Gamma(k+1)}{\Gamma(\alpha+k+1)}\int_Y f(x)\phi^\alpha_{k,\lambda}(x)x^{2\alpha+1}\,dx,
\end{equation}  
then we have
\begin{equation}
f = \frac{2|\lambda|^{\alpha+1}}{\Gamma(\alpha+1)}\sum_{k=0}^\infty \hat{f}(k)\phi^\alpha_{k,\lambda}
\end{equation}
in $L^2(Y,x^{2\alpha+1}\,dx)$ norm. Moreover, using Proposition \ref{translation_of_Laguerre_functions} one can check that 
\begin{equation}\label{relation_convolution_transform}
f\ast_\lambda\phi^\alpha_{k,\lambda} = \hat{f}(k)\phi^\alpha_{k,\lambda}.
\end{equation}
Hence we have another representation for $f\in L^2(Y,x^{2\alpha+1}\,dx)$:
\begin{proposition}
For $f\in L^2(Y,x^{2\alpha+1}\,dx)$, we have \[ f = \frac{2|\lambda|^{\alpha+1}}{\Gamma(\alpha+1)}\sum\limits_{k=0}^\infty f\ast_\lambda\phi^\alpha_{k,\lambda}\] in $L^2$ norm.
\end{proposition}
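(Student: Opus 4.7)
The plan is to observe that this proposition is an immediate corollary of the two facts already stated just above it: the expansion
\[
f = \frac{2|\lambda|^{\alpha+1}}{\Gamma(\alpha+1)}\sum_{k=0}^\infty \hat{f}(k)\phi^\alpha_{k,\lambda}
\]
in $L^2(Y,x^{2\alpha+1}\,dx)$ norm, which is in turn a rewriting of the orthonormal basis expansion from Proposition \ref{orthogonality_of_Laguerre_functions} (after absorbing the normalizing constants between $\tilde\phi^\alpha_{k,\lambda}$ and $\phi^\alpha_{k,\lambda}$), and the identity \eqref{relation_convolution_transform}, namely
\[
f\ast_\lambda \phi^\alpha_{k,\lambda} = \hat{f}(k)\phi^\alpha_{k,\lambda}.
\]

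Given these, the proof is a one-line substitution: replace each summand $\hat{f}(k)\phi^\alpha_{k,\lambda}$ in the first display by $f\ast_\lambda \phi^\alpha_{k,\lambda}$ using the second display, obtaining the claimed formula termwise, and hence in $L^2$ norm since the partial sums coincide.

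The only point worth checking is that \eqref{relation_convolution_transform} really is a pointwise (or $L^2$) identity on each term, but this is exactly what Proposition \ref{translation_of_Laguerre_functions} gives: plugging $T^y_{\alpha,\lambda}\phi^\alpha_{k,\lambda}(x) = \frac{\Gamma(\alpha+1)\Gamma(k+1)}{\Gamma(\alpha+k+1)}\phi^\alpha_{k,\lambda}(y)\phi^\alpha_{k,\lambda}(x)$ into the definition of Laguerre convolution pulls out the factor $\phi^\alpha_{k,\lambda}(x)$ and leaves precisely $\hat{f}(k)$ as defined. There is no real obstacle; the statement is recorded as a separate proposition only because this $f\ast_\lambda \phi^\alpha_{k,\lambda}$ form of the expansion is what will be invoked repeatedly in later sections when studying $\L_s$ and $\G_s$, where the convolution structure is more convenient than the coefficient form.
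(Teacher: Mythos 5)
Your proposal is correct and matches the paper exactly: the paper gives no separate proof, simply deducing the proposition (``Hence we have\dots'') by substituting the identity $f\ast_\lambda\phi^\alpha_{k,\lambda}=\hat f(k)\phi^\alpha_{k,\lambda}$ from \eqref{relation_convolution_transform} into the orthonormal-basis expansion $f = \frac{2|\lambda|^{\alpha+1}}{\Gamma(\alpha+1)}\sum_{k}\hat f(k)\phi^\alpha_{k,\lambda}$, which is precisely your one-line argument. Your additional check that \eqref{relation_convolution_transform} follows from Proposition \ref{translation_of_Laguerre_functions} is also the route the paper indicates.
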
 
Again using Proposition \ref{translation_of_Laguerre_functions}, one can check that $\phi_{k,\lambda}^\alpha\ast_\lambda \phi_{j,\lambda}^\alpha(x) = \frac{\Gamma(\alpha+1)}{2|\lambda|^{\alpha+1}}\phi_{k,\lambda}^\alpha(x)\delta_{k,j}$, where $\delta_{k,j}$ is the kronecker delta function. Using this we can easily calculate Laguerre coefficients of $f\ast_\lambda g$ for $f,g\in L^2(Y,x^{2\alpha+1})$.
\begin{proposition}\label{convolution into product}
For $f,g\in L^2(Y,x^{2\alpha+1})$, Laguerre coefficients of $f\ast_\lambda g$ are related to Laguerre coefficients of $f$ and $g$ by \[(\widehat{f\ast_\lambda g})(k) = \hat{f}(k)\hat{g}(k).\]
\end{proposition}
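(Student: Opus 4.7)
The plan is to diagonalize the convolution $*_\lambda$ by expanding in the Laguerre basis and using the identity stated just above the proposition, namely
\[
\phi^\alpha_{k,\lambda}*_\lambda \phi^\alpha_{j,\lambda}(x) \;=\; \frac{\Gamma(\alpha+1)}{2|\lambda|^{\alpha+1}}\,\phi^\alpha_{k,\lambda}(x)\,\delta_{kj}.
\]
This says that the Laguerre functions are joint (up to normalization) idempotents / orthogonal eigenfunctions for $*_\lambda$, so the convolution $f*_\lambda g$ should act diagonally on the Laguerre coefficients.

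Concretely, I would first write both $f$ and $g$ in their $L^2$-convergent Laguerre expansions
\[
f \;=\; \frac{2|\lambda|^{\alpha+1}}{\Gamma(\alpha+1)} \sum_{k=0}^\infty \hat f(k)\,\phi^\alpha_{k,\lambda}, \qquad g \;=\; \frac{2|\lambda|^{\alpha+1}}{\Gamma(\alpha+1)} \sum_{j=0}^\infty \hat g(j)\,\phi^\alpha_{j,\lambda},
\]
then formally substitute into the definition of the convolution and interchange the sums with the integral defining $*_\lambda$. Using the orthogonality identity above, the double sum collapses:
\[
f*_\lambda g \;=\; \left(\frac{2|\lambda|^{\alpha+1}}{\Gamma(\alpha+1)}\right)^{\!2}\sum_{k,j} \hat f(k)\hat g(j)\,\phi^\alpha_{k,\lambda}*_\lambda\phi^\alpha_{j,\lambda} \;=\; \frac{2|\lambda|^{\alpha+1}}{\Gamma(\alpha+1)}\sum_{k} \hat f(k)\hat g(k)\,\phi^\alpha_{k,\lambda}.
\]
Comparing this with the Laguerre expansion of $f*_\lambda g$ and using uniqueness of the Laguerre coefficients (which follows from Proposition \ref{orthogonality_of_Laguerre_functions}) then yields $(\widehat{f*_\lambda g})(k) = \hat f(k)\hat g(k)$.

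The main technical obstacle is justifying the interchange of the infinite summations with the integral in $*_\lambda$, since the Laguerre translation $T^y_{\alpha,\lambda}$ is an oscillatory integral operator and a priori $*_\lambda$ of two $L^2$ functions need not even be well-defined pointwise. The cleanest way to handle this is to first prove the identity for $f, g$ chosen as finite linear combinations of the $\phi^\alpha_{j,\lambda}$, where both sides reduce to a finite sum and no convergence issues arise. Once established there, one can extract the $k$-th Laguerre coefficient — a continuous linear functional in the convolution factors — and pass to the $L^2$ limit, or alternatively invoke (\ref{relation_convolution_transform}) together with associativity of $*_\lambda$ to arrive directly at $(f*_\lambda g)*_\lambda\phi^\alpha_{k,\lambda} = \hat f(k)\hat g(k)\,\phi^\alpha_{k,\lambda}$, from which the statement is immediate.
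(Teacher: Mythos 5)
Your proposal is correct and takes essentially the same route as the paper: the paper's (essentially one-line) argument likewise rests on the idempotent identity $\phi_{k,\lambda}^\alpha\ast_\lambda\phi_{j,\lambda}^\alpha = \frac{\Gamma(\alpha+1)}{2|\lambda|^{\alpha+1}}\phi_{k,\lambda}^\alpha\delta_{k,j}$ and the collapse of the double Laguerre expansion. The additional care you take with the interchange of sums and integrals (via finite linear combinations and density, or via associativity together with \eqref{relation_convolution_transform}) only supplies rigor that the paper leaves implicit.
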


\subsection{Laguerre transform on $\R^+\times\R$}
Let $X = \R^+\times \R$ and $\alpha>-1/2$. We will denote the elements of $X$ by Greek letters $\xi$, $\eta$, etc, with the understanding that $\xi = (x,w)$ means $x\in \mathbb{R}^{+}$ and $w\in\mathbb{R}$. We equip $X$ with measure $d\mu(x,w) = x^{2\alpha+1}dx\,dw$, where $dx$ and $dw$ are the standard Lesbegue measures. For $\xi = (x,w)$ and $\eta = (y,v)$, and $\theta, \phi\in\mathbb{R}$, we define product \[(\xi,\eta)_{\theta,\phi} = ((x^2+y^2-2xy\cos(\theta))^{\frac{1}{2}},w-v+xy\cos(\phi)\sin(\theta))\] Define measure $\nu$ on $[0,\pi]\times [0,\pi]$ by:\[d\nu(\theta,\phi) = \frac{\alpha}{\pi}(\sin(\phi))^{2\alpha-1}(\sin(\theta))^{2\alpha}d\theta\,d\phi,\] where $d\theta$ and $d\phi$ are again standard Lesbegue measures. For $f\in L^1(X,\mu)$, we define \textit{generalized translation operator} $T^\eta$ for $\eta\in X$ by 
\begin{equation}
T^\eta f(\xi) = \int_0^\pi\int_0^\pi f((\xi,\eta)_{(\theta,\phi)})\,d\nu(\theta,\phi).
\end{equation}
Using the generalized translation operator, we define convolution $f\ast g$, for $f,g\in L^1(X,\mu)$ by
\begin{equation}
f\ast g(\xi) =\int_XT^\eta f(\xi)g(\eta)\,d\mu(\eta).
\end{equation}
For $\lambda\in\R\setminus\{0\}$ and $k=0,1,2,\cdots$, define 
\begin{equation}
\psi^\alpha_{k,\lambda}(x,w) = \frac{\Gamma(k+1)\Gamma(\alpha+1)}{\Gamma(\alpha+k+1)}e^{i\lambda w}\phi^\alpha_{k,\lambda}(x).
\end{equation} Though the definition of generalized translation is complicated, its action on $\psi^\alpha_{k,\lambda}$ is simple.
\begin{proposition}\label{generalized translation of laguerre functions}
We have \[T^{(y,-v)}\psi^\alpha_{k,\lambda}(x,w) =\psi^\alpha_{k,\lambda}(y,v)\psi^\alpha_{k,\lambda}(x,w). \]
\end{proposition}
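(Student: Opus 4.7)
The plan is to substitute definitions, separate the exponential factor, perform the $\phi$-integration explicitly using the Poisson integral for the Bessel function, and then recognize what remains as the Laguerre translation $T^{y}_{\alpha,\lambda}\phi^{\alpha}_{k,\lambda}(x)$, at which point Proposition \ref{translation_of_Laguerre_functions} finishes the job.

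First I would write out $T^{(y,-v)}\psi^{\alpha}_{k,\lambda}(x,w)$ by plugging $\eta=(y,-v)$ into the product $(\xi,\eta)_{\theta,\phi}$. The first coordinate becomes $(x^{2}+y^{2}-2xy\cos\theta)^{1/2}$ (independent of $w,v$), and the second coordinate becomes $w+v+xy\cos\phi\sin\theta$. Hence the $e^{i\lambda w}$-factor of $\psi^{\alpha}_{k,\lambda}$ in the integrand splits as $e^{i\lambda w}e^{i\lambda v}e^{i\lambda xy\cos\phi\sin\theta}$, pulling $e^{i\lambda w}e^{i\lambda v}$ out of the double integral. We are left with a product integral in $(\theta,\phi)$ in which only $e^{i\lambda xy\cos\phi\sin\theta}$ depends on $\phi$.

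Next I would evaluate the inner $\phi$-integral using the classical identity
\[
\int_{0}^{\pi}e^{iz\cos\phi}\sin^{2\nu}\phi\,d\phi=\sqrt{\pi}\,\Gamma(\nu+\tfrac{1}{2})\,(z/2)^{-\nu}J_{\nu}(z),\qquad\nu>-\tfrac{1}{2},
\]
with $\nu=\alpha-1/2$ and $z=\lambda xy\sin\theta$. Since the integrand is even in $z$ (the imaginary part is odd under $\phi\mapsto\pi-\phi$), the result depends only on $|\lambda|$, and converting $J_{\alpha-1/2}$ to $j_{\alpha-1/2}$ gives, after collecting the constant $\alpha/\pi$ from $d\nu$,
\[
\int_{0}^{\pi}e^{i\lambda xy\cos\phi\sin\theta}\,\tfrac{\alpha}{\pi}\sin^{2\alpha-1}\phi\,d\phi=\frac{\Gamma(\alpha+1)\,2^{\alpha-1/2}}{\sqrt{\pi}}\,j_{\alpha-1/2}(|\lambda|xy\sin\theta).
\]

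The remaining $\theta$-integral has the form
\[
\int_{0}^{\pi}\phi^{\alpha}_{k,\lambda}\!\bigl((x^{2}+y^{2}-2xy\cos\theta)^{1/2}\bigr)\,j_{\alpha-1/2}(|\lambda|xy\sin\theta)\,\sin^{2\alpha}\theta\,d\theta,
\]
and substituting $\theta\mapsto\pi-\theta$ flips $-2xy\cos\theta$ to $+2xy\cos\theta$ while leaving $\sin\theta$ (and hence the Bessel and $\sin^{2\alpha}\theta$ factors) unchanged. Noting that $\Gamma(\alpha+1)2^{\alpha-1/2}/\sqrt{\pi}=\Gamma(\alpha+1)2^{\alpha}/\sqrt{2\pi}$, this is precisely the constant appearing in the Laguerre translation, so the whole $(\theta,\phi)$-integral equals $T^{y}_{\alpha,\lambda}\phi^{\alpha}_{k,\lambda}(x)$. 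Applying Proposition \ref{translation_of_Laguerre_functions}, this becomes $\frac{\Gamma(\alpha+1)\Gamma(k+1)}{\Gamma(\alpha+k+1)}\phi^{\alpha}_{k,\lambda}(y)\phi^{\alpha}_{k,\lambda}(x)$. Combining with the prefactor $\frac{\Gamma(k+1)\Gamma(\alpha+1)}{\Gamma(\alpha+k+1)}e^{i\lambda w}e^{i\lambda v}$ pulled out in step one produces the square of this ratio times $e^{i\lambda w}\phi^{\alpha}_{k,\lambda}(x)\cdot e^{i\lambda v}\phi^{\alpha}_{k,\lambda}(y)$, which is exactly $\psi^{\alpha}_{k,\lambda}(x,w)\,\psi^{\alpha}_{k,\lambda}(y,v)$.

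The main obstacle is purely bookkeeping: keeping the various constants ($\alpha/\pi$, $\Gamma(\alpha+1)$, powers of $2$), matching the sign convention $-2xy\cos\theta$ versus $+2xy\cos\theta$ in the two definitions of translation, and verifying that the Bessel integral identity applies uniformly in the sign of $\lambda$. No analytic difficulty beyond the Poisson integral is anticipated; once the $\phi$-integration is done, the reduction to Proposition \ref{translation_of_Laguerre_functions} is immediate.
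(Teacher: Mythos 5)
Your proof is correct, and it is worth noting that the paper itself gives no argument for this proposition: it simply cites Stempak [Lemma 4.2]. You have therefore supplied the derivation that the paper outsources, and the details check out. Plugging $\eta=(y,-v)$ into $(\xi,\eta)_{\theta,\phi}$ does give second coordinate $w+v+xy\cos\phi\sin\theta$, so the factor $e^{i\lambda w}e^{i\lambda v}$ pulls out; the Poisson integral with $\nu=\alpha-\tfrac12$ turns the $\phi$-integral against $\tfrac{\alpha}{\pi}\sin^{2\alpha-1}\phi\,d\phi$ into $\tfrac{\alpha}{\pi}\sqrt{\pi}\,\Gamma(\alpha)\,2^{\alpha-1/2}j_{\alpha-1/2}(z)=\tfrac{\Gamma(\alpha+1)2^{\alpha}}{\sqrt{2\pi}}j_{\alpha-1/2}(z)$, which is exactly the normalizing constant in the definition of $T^{y}_{\alpha,\lambda}$; the evenness of $j_{\alpha-1/2}$ (a power series in $z^2$) justifies replacing $\lambda$ by $|\lambda|$; and $\theta\mapsto\pi-\theta$ reconciles the $-2xy\cos\theta$ of the two-variable product with the $+2xy\cos\theta$ of the Laguerre translation while fixing $\sin\theta$. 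The only caveat worth recording is that the Poisson integral requires $\alpha-\tfrac12>-\tfrac12$, i.e.\ $\alpha>0$ — but this is precisely the range on which the density $\sin^{2\alpha-1}\phi$ is integrable and the paper's explicit formula for $T^{\eta}$ makes literal sense, so your argument is valid wherever the definition is; extending to $-\tfrac12<\alpha\leqslant 0$ would require the alternative interpretation of the translation used in Stempak's paper. Your reduction to Proposition \ref{translation_of_Laguerre_functions} is the natural route, and it has the added benefit of making transparent why the eigenfunction property of $\psi^{\alpha}_{k,\lambda}$ under $T^{(y,-v)}$ is nothing more than the one-dimensional product formula for Laguerre functions dressed up with an oscillatory factor.
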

We refer to \cite{stempak1}[Lemma 4.2] for the proof.\\

For $f\in L^1(X)$, $\lambda\in\R\setminus\{0\}$ and $k = 0,1,2,\dots$, we define its \textit{Laguerre transform} $\widehat{f}(\lambda,k)$ by
\begin{equation}
\widehat{f}(\lambda,k) = \int_X f(x,w)\psi^\alpha_{k,\lambda}(x,w)\,d\mu(x,w).
\end{equation}

\begin{proposition}\label{inverse_theorem}
For $f\in L^2(X)$, we have \[f(x,w) = \frac{1}{\pi\Gamma(\alpha+1)} \int_\R\sum\limits_{k=0}^\infty \widehat{f}(\lambda,k)\phi^\alpha_{k,\lambda}(x)|\lambda|^{\alpha+1}e^{-i\lambda w}\,d\lambda,\] in $L^2(X)$ norm.
\end{proposition}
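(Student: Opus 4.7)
The plan is to view the Laguerre transform on $X = \R^+\times\R$ as the composition of the Euclidean Fourier transform in the $w$-variable with the one-variable Laguerre transform of Section 2.1, and to invert each factor separately. For $f\in L^2(X)$, set $f^\lambda(x) = \int_\R f(x,w) e^{i\lambda w}\,dw$; by Plancherel in the $w$-variable, $f^\lambda(\cdot)$ lies in $L^2(\R^+, x^{2\alpha+1}dx)$ for almost every $\lambda$, and $f$ is recovered as
\[f(x,w) = \frac{1}{2\pi} \int_\R f^\lambda(x) e^{-i\lambda w}\,d\lambda\]
in the $L^2(X)$-norm.

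Next I would unwind the definitions: using $\psi^\alpha_{k,\lambda}(x,w) = \frac{\Gamma(k+1)\Gamma(\alpha+1)}{\Gamma(\alpha+k+1)} e^{i\lambda w} \phi^\alpha_{k,\lambda}(x)$ together with Fubini,
\[\widehat{f}(\lambda, k) = \frac{\Gamma(\alpha+1)\Gamma(k+1)}{\Gamma(\alpha+k+1)} \int_0^\infty f^\lambda(x) \phi^\alpha_{k,\lambda}(x)\, x^{2\alpha+1}dx,\]
which is precisely the Laguerre coefficient $\widehat{f^\lambda}(k)$ defined in Section 2.1. Proposition \ref{orthogonality_of_Laguerre_functions} therefore gives, for almost every $\lambda$,
\[f^\lambda(x) = \frac{2|\lambda|^{\alpha+1}}{\Gamma(\alpha+1)} \sum_{k=0}^\infty \widehat{f}(\lambda, k) \phi^\alpha_{k,\lambda}(x)\]
in $L^2(\R^+, x^{2\alpha+1}dx)$. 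Substituting this into the Fourier inversion formula above and noting $\frac{1}{2\pi}\cdot\frac{2}{\Gamma(\alpha+1)} = \frac{1}{\pi\Gamma(\alpha+1)}$ yields the claimed identity.

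The main technical hurdle is justifying convergence and the interchange of the $\lambda$-integral, the $k$-sum, and the partial Fourier transform at the level of $L^2(X)$. I would handle this by first verifying the formula on a dense subclass, for instance finite sums $f(x,w) = \sum_{k=0}^N g_k(w) \phi^\alpha_{k,\lambda_0}(x)$ with the $g_k$ Schwartz, where every interchange reduces to a finite manipulation. I would then promote the identity to all of $L^2(X)$ by chaining the $w$-Plancherel theorem with the Laguerre Plancherel identity coming from Proposition \ref{orthogonality_of_Laguerre_functions}; explicitly,
\[\|f\|_{L^2(X)}^2 = \frac{1}{\pi\Gamma(\alpha+1)^2} \int_\R \sum_{k=0}^\infty \frac{\Gamma(\alpha+k+1)}{\Gamma(k+1)} |\widehat{f}(\lambda,k)|^2 |\lambda|^{\alpha+1}\,d\lambda.\]
This combined Plancherel formula shows that the right-hand side of the asserted inversion formula defines a bounded operator on $L^2(X)$ agreeing with $f$ on a dense subspace, so by continuity the inversion holds throughout $L^2(X)$. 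The only genuinely subtle point is bookkeeping the normalizing constants, which is routine once the two Plancherel identities are on the table.
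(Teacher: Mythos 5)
Your argument is sound, and it is worth noting that the paper does not actually prove this proposition at all: it simply defers to Stempak \cite{stempak2}[Lemma 3.1]. What you have written is the standard self-contained proof, factoring the transform as (partial Fourier transform in $w$) composed with (one-variable Laguerre expansion in $x$), and your constants check out: $\widehat{f}(\lambda,k)=\widehat{f^\lambda}(k)$ follows directly from the definition of $\psi^\alpha_{k,\lambda}$, the a.e.-$\lambda$ expansion $f^\lambda=\frac{2|\lambda|^{\alpha+1}}{\Gamma(\alpha+1)}\sum_k\widehat{f}(\lambda,k)\phi^\alpha_{k,\lambda}$ is exactly the identity recorded after Proposition \ref{orthogonality_of_Laguerre_functions}, and $\frac{1}{2\pi}\cdot\frac{2}{\Gamma(\alpha+1)}=\frac{1}{\pi\Gamma(\alpha+1)}$ gives the stated normalization; the combined Plancherel identity you display (with the factor $\frac{1}{\pi\Gamma(\alpha+1)^2}$ and weight $\frac{\Gamma(\alpha+k+1)}{\Gamma(k+1)}$) is also correct and is the right tool to justify the $L^2(X)$ convergence of the truncations in $k$ and $\lambda$ simultaneously. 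One small remark: your proposed dense subclass $\sum_{k\le N}g_k(w)\phi^\alpha_{k,\lambda_0}(x)$ with a \emph{fixed} $\lambda_0$ does not actually make the manipulations finite, since $\widehat{f}(\lambda,k)$ pairs against $\phi^\alpha_{k,\lambda}$ at the running frequency $\lambda\ne\lambda_0$, so the $k$-sum in the inversion formula remains infinite for such $f$; this step is harmless but also unnecessary, because the a.e.-$\lambda$ expansion plus the combined Plancherel identity already yield the $L^2(X)$ statement without any intermediate dense class.
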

For proof see \cite{stempak2}[Lemma 3.1].\\

For $f\in L^1(X,\mu)$, we define \begin{equation}
f^\lambda(x) = \int_\R f(x,w)e^{i\lambda w}\,dw,
\end{equation} that is inverse Fourier transform in the second variable. For $f\in L^2(X)$, if we calculate the Laguerre coefficients of $f^\lambda$, we immediately notice that 
\begin{equation}
\widehat{f^\lambda}(k) = \widehat{f}(\lambda,k).
\end{equation}
Also using the Proposition \ref{generalized translation of laguerre functions}, one can check that 
\begin{equation}
\widehat{f\ast g} = \widehat{f}\,\widehat{g}.
\end{equation}

\subsection{Fractional powers of generalized sublaplacian}

For $\alpha>-1/2$, we define \textit{generalized sublaplacian} $\L$ on $X$ by
\begin{equation}
-\frac{1}{2}\left(\frac{\partial^2}{\partial x^2}+\frac{2\alpha+1}{x}\frac{\partial}{\partial x}+x^2\frac{\partial^2}{\partial w^2}\right).
\end{equation}
This operator is positive and symmetric in $L^2(X)$. Using the fact that Laguerre polynomials satisfy the following identity
\begin{equation}
x\frac{d^2}{d x^2}L^\alpha_k(x) +(\alpha+1-x)\frac{d}{d x}L^\alpha_k(x)+k L^\alpha_k(x) = 0,
\end{equation}  
one can check that
\begin{equation}\label{eigenvalues_of_sublaplacian}
\L\psi^\alpha_{k,\lambda} = |\lambda|(2k+\alpha+1)\psi^\alpha_{k,\lambda}.
\end{equation} 
Thus $\psi^\alpha_{k,\lambda}$ are the eigenvectors for $\L$ with $|\lambda|(2k+\alpha+1)$ as corresponding eigenvalues. Moreover, for $f\in L^2(X)$ such that $\L f \in L^2(X)$, we have  
\begin{equation}
\widehat{\L f}(\lambda,k) = |\lambda|(2k+\alpha+1)\widehat{f}(\lambda,k).
\end{equation}

Using Proposition \ref{inverse_theorem} and \eqref{eigenvalues_of_sublaplacian}, we obtain the following spectral decomposition of $\L$:
\begin{equation}
\mathcal{L}f(x,w) = \frac{1}{\pi\Gamma(\alpha+1)}\int_\R\left(\sum\limits_{k=0}^\infty|\lambda|(2k+\alpha+1)\; \widehat{f}(\lambda, k)\phi^\alpha_{k,\lambda}(x)\right)|\lambda|^{\alpha+1}e^{-i\lambda w}\,d\lambda.
\end{equation}

Therefore, using spectral decomposition, we define fractional powers of the generalized sublaplacian $\L^s$ for $0<s<1$:
\begin{equation}
\mathcal{L}^s f(x,w) = \frac{1}{\pi\Gamma(\alpha+1)}\int_\R\left(\sum\limits_{k=0}^\infty (|\lambda|(2k+\alpha+1))^s\; \widehat{f}(\lambda, k)\phi^\alpha_{k,\lambda}(x)\right)|\lambda|^{\alpha+1}e^{-i\lambda w}\,d\lambda.
\end{equation} However, it is convenient to work with the following modified fractional power of $\mathcal{L}$. For $0<s<1$, we define $\mathcal{L}_s$ by
\begin{equation}
\L_sf(x,w) = \frac{1}{\pi\Gamma(\alpha+1)}\int_\R\left(\sum\limits_{k=0}^\infty (2|\lambda|)^s\frac{\Gamma(\frac{2k+\alpha+1}{2}+\frac{1+s}{2})}{\Gamma(\frac{2k+\alpha+1}{2}+\frac{1-s}{2})} \; \widehat{f}(\lambda, k)\phi^\alpha_{k,\lambda}(x)\right)|\lambda|^{\alpha+1}e^{-i\lambda w}\,d\lambda.
\end{equation} 
Thus $\mathcal{L}_s$ corresponds to the spectral multiplier \begin{equation}\label{spectralmultiplier}
(2|\lambda|)^s\frac{\Gamma\left(\frac{2k+\alpha+1}{2}+\frac{1+s}{2}\right)}{\Gamma\left(\frac{2k+\alpha+1}{2}+\frac{1-s}{2}\right)}.
\end{equation}
Finally, we define $W^{s,2}(X)$ as the space consisting of $f\in L^2(X)$ such that $\L_sf\in L^2(X)$.

\subsection{Heat semigroup associated with $\L$}
The \textit{heat semigroup} $e^{-t\L}$ generated by $\mathcal{L}$ is defined by the relation
\begin{equation}
\widehat{e^{-t\L}f}(\lambda,k) = e^{-|\lambda|(2k+\alpha+1)t}\widehat{f}(\lambda,k).  
\end{equation}
Thus we have 
\begin{equation}
e^{-t\L} f(x,w) = f\ast h_t(x,w),
\end{equation} where $h_t$ is the \textit{heat kernel} associated with $\mathcal{L}$ given by \begin{equation}
\widehat{h}_t(\lambda,k) = e^{-|\lambda|(2k+\alpha+1)t}.
\end{equation} Although the expression for $h_t(x,w)$ is not known explicitly, we have the explicit expression for $h^\lambda_t(x)$.

\begin{proposition}\label{heat_kernel_expression}
We have \[h^\lambda_t(x) = \frac{2}{\Gamma(\alpha+1)}\left(\frac{\lambda}{2\sinh(\lambda t)}\right)^{\alpha+1}e^{-\frac{\lambda}{2}x^2 \coth(\lambda t)}.\]
\end{proposition}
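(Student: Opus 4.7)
The plan is to compute $h_t^\lambda(x)$ by summing its Laguerre expansion in closed form. From the identity $\widehat{f^\lambda}(k)=\widehat{f}(\lambda,k)$ applied to $f=h_t$, the Laguerre coefficients of $h_t^\lambda$ are exactly $e^{-|\lambda|(2k+\alpha+1)t}$. The Laguerre inversion formula (the expansion $f=\tfrac{2|\lambda|^{\alpha+1}}{\Gamma(\alpha+1)}\sum_k \hat f(k)\phi^\alpha_{k,\lambda}$) then gives, after pulling out the common exponential factors,
\[
h_t^\lambda(x)=\frac{2|\lambda|^{\alpha+1}}{\Gamma(\alpha+1)}\,e^{-|\lambda|(\alpha+1)t}\,e^{-\frac{1}{2}|\lambda|x^2}\sum_{k=0}^\infty e^{-2|\lambda|kt}L_k^\alpha(|\lambda|x^2).
\]
Convergence of the series is not an issue since the ratio $r=e^{-2|\lambda|t}$ lies in $(0,1)$ for $t,|\lambda|>0$.

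Next I would invoke the classical generating function for Laguerre polynomials,
\[
\sum_{k=0}^\infty L_k^\alpha(u)\,r^k=(1-r)^{-(\alpha+1)}\exp\Bigl(-\frac{ur}{1-r}\Bigr),\qquad |r|<1,
\]
with $r=e^{-2|\lambda|t}$ and $u=|\lambda|x^2$. The prefactor simplifies via
\[
\frac{e^{-|\lambda|(\alpha+1)t}}{(1-e^{-2|\lambda|t})^{\alpha+1}}=\Bigl(\frac{1}{2\sinh(|\lambda|t)}\Bigr)^{\alpha+1},
\]
while the two Gaussian-type exponents combine using $\tfrac{1}{2}+\tfrac{r}{1-r}=\tfrac{1+r}{2(1-r)}$ into
\[
-\frac{|\lambda|x^2}{2}-\frac{|\lambda|x^2\,e^{-2|\lambda|t}}{1-e^{-2|\lambda|t}}=-\frac{|\lambda|x^2}{2}\coth(|\lambda|t).
\]
Collecting these pieces yields
\[
h_t^\lambda(x)=\frac{2}{\Gamma(\alpha+1)}\Bigl(\frac{|\lambda|}{2\sinh(|\lambda|t)}\Bigr)^{\alpha+1}\exp\Bigl(-\frac{|\lambda|}{2}x^2\coth(|\lambda|t)\Bigr).
\]

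To match the stated form, I would note that $\sinh$ is odd, so $\lambda/\sinh(\lambda t)=|\lambda|/\sinh(|\lambda|t)>0$ and $\lambda\coth(\lambda t)=|\lambda|\coth(|\lambda|t)$, which shows the two expressions agree regardless of the sign of $\lambda$ (and in particular that the fractional power $(\,\cdot\,)^{\alpha+1}$ is unambiguous, being a positive real base). I do not foresee a genuine obstacle here: everything reduces to the Laguerre generating function and elementary hyperbolic identities. The only point that requires a little care is the bookkeeping that turns $e^{-|\lambda|(\alpha+1)t}/(1-e^{-2|\lambda|t})^{\alpha+1}$ into $(2\sinh(|\lambda|t))^{-(\alpha+1)}$ and consolidates the two Gaussian factors into the single $\coth$ term.
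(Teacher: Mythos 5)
Your proposal is correct and follows essentially the same route as the paper: expand $h_t^\lambda$ in the Laguerre basis with coefficients $e^{-|\lambda|(2k+\alpha+1)t}$ and sum via the Laguerre generating function with $r=e^{-2|\lambda|t}$. The paper leaves the final simplification to the reader, whereas you carry out the hyperbolic bookkeeping (and the sign-of-$\lambda$ check) explicitly; both are fine.
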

\begin{proof}
Using Proposition \ref{inverse_theorem}, we have \begin{align*}
h_t(x,w) &= \frac{1}{\pi\Gamma(\alpha+1)}\sum\limits_{k=0}^\infty \int_{-\infty}^\infty e^{-|\lambda|(2k+\alpha+1)t}\phi^\alpha_{k,\lambda}(x)|\lambda|^{\alpha+1}e^{-i\lambda w}\,d\lambda\\
&= \frac{1}{2\pi}\int_{-\infty}^\infty h_t^\lambda(x)e^{-i\lambda w}\,d\lambda,
\end{align*}
where \[h_t^\lambda(x) = \frac{2}{\Gamma(\alpha+1)}\sum\limits_{k=0}^\infty e^{-|\lambda|(2k+\alpha+1)t}\phi^\alpha_{k,\lambda}(x)|\lambda|^{\alpha+1}.\] Using the generating function identity for Laguerre functions \[\sum_{k=0}^\infty L^\alpha_{k}(x)r^k = (1-r)^{-(\alpha+1)}e^{-\frac{rx}{1-r}},\] we simplify to get the desired expression for $h_t^\lambda$.
\end{proof}

For $0<s<1$ and $t>0$, we define $K_{t,s}(x,w)$ by
\begin{equation}\label{modified kernel}
K^\lambda_{t,s}(x) = h^\lambda_{t}(x)\left(\frac{\lambda t}{\sinh \lambda t}\right)^{s+1}.
\end{equation}

\begin{lemma}\label{properties of modified kernel}
We have the following properties of $K_{t,s}$
\begin{equation}\label{eq3}
\int_X K_{t,s}(x,w)\,d\mu(x,w) = 1.
\end{equation}
\begin{equation}\label{eq1}
\int_X T^\eta K_{t,s}(\xi)\,d\mu(\xi) = \int_X T^\eta K_{t,s}(\xi)\,d\mu(\eta) = 1.
\end{equation}
\begin{equation}\label{eq2}
f\ast K_{t,s}(\xi) = K_{t,s}\ast f(\xi).
\end{equation}
\end{lemma}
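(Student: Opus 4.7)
The plan is to handle the three identities in order, with \eqref{eq3} as a base case, \eqref{eq1} following from a general translation-invariance identity together with \eqref{eq3}, and \eqref{eq2} reducing to a one-line Laguerre-transform argument.

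For \eqref{eq3}, I would integrate first in $w$ using Fubini. Since $\int_{\R} K_{t,s}(x,w)\,dw = K_{t,s}^{\lambda}(x)\big|_{\lambda=0}$ and the factor $(\lambda t/\sinh\lambda t)^{s+1}$ tends to $1$ as $\lambda\to 0$, this reduces to $\int_0^\infty h_t^0(x)\,x^{2\alpha+1}\,dx$. Taking the $\lambda\to 0$ limit in Proposition \ref{heat_kernel_expression} gives $h_t^0(x) = \frac{2}{\Gamma(\alpha+1)}(2t)^{-(\alpha+1)}e^{-x^2/(2t)}$, after which the substitution $u=x^2/(2t)$ turns the integral into the Gamma function and produces $1$. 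Equivalently, since $\psi^\alpha_{0,0}\equiv 1$, one has $\int_X K_{t,s}\,d\mu = \widehat{K_{t,s}}(0,0)$, and
\[
\widehat{K_{t,s}}(\lambda,k) = e^{-|\lambda|(2k+\alpha+1)t}\left(\frac{\lambda t}{\sinh\lambda t}\right)^{s+1}
\]
clearly equals $1$ at $(\lambda,k)=(0,0)$.

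For \eqref{eq1}, I would first prove the stronger claim that $\int_X T^\eta f(\xi)\,d\mu(\eta) = \int_X f\,d\mu = \int_X T^\eta f(\xi)\,d\mu(\xi)$ for every $f\in L^1(X,\mu)$, and then apply it with $f=K_{t,s}$ using \eqref{eq3}. Unfolding the definition of $T^\eta$ and swapping the $\nu$-integration with the $\eta$-integration, the substitution $v' = w-v+xy\cos\phi\sin\theta$ eliminates the second-coordinate shift and reduces matters to
\[
\int_0^\pi\!\!\int_0^\pi d\nu(\theta,\phi)\int_0^\infty F\!\bigl(\sqrt{x^2+y^2-2xy\cos\theta}\bigr)\,y^{2\alpha+1}\,dy,
\]
where $F(r)=\int_\R f(r,v)\,dv$. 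The decisive ingredient is the polar-coordinate form of Euclidean translation invariance on $\R^{n}$ with $n=2\alpha+2$: rewriting $\int_{\R^n}F(|Y-xe_1|)\,dY = \int_{\R^n}F(|Y|)\,dY$ in polar coordinates yields
\[
\int_0^\pi\!\!\int_0^\infty F\!\bigl(\sqrt{x^2+y^2-2xy\cos\theta}\bigr)y^{2\alpha+1}\sin^{2\alpha}\theta\,dy\,d\theta = \frac{\omega_{2\alpha+1}}{\omega_{2\alpha}}\int_0^\infty F(r)r^{2\alpha+1}\,dr,
\]
where $\omega_{n-1}$ denotes the surface area of $S^{n-1}$. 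Combined with the $\phi$-integration, the Beta--Gamma relations make $\frac{\alpha}{\pi}B(\alpha,1/2)\cdot\omega_{2\alpha+1}/\omega_{2\alpha}=1$, so the expression equals $\int f\,d\mu$. The companion identity with $d\mu(\xi)$ in place of $d\mu(\eta)$ follows by the same computation, since the first entry of $(\xi,\eta)_{\theta,\phi}$ is symmetric in $x,y$.

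Finally, \eqref{eq2} is immediate from the convolution theorem $\widehat{f\ast g}=\widehat f\,\widehat g$ of Subsection~2.2 and the injectivity of the Laguerre transform (Proposition~\ref{inverse_theorem}): both $\widehat{f\ast K_{t,s}}$ and $\widehat{K_{t,s}\ast f}$ coincide with $\widehat f(\lambda,k)\,\widehat{K_{t,s}}(\lambda,k)$. The main obstacle is the constant-tracking in \eqref{eq1}; the exact cancellation between the $\nu$-normalization $\alpha/\pi$ and the Euclidean ratio $\omega_{2\alpha+1}/\omega_{2\alpha}$ is precisely what motivates the specific prefactor in the definition of $d\nu$ for the generalized translation.
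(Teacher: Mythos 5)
Your proposal is correct, and for \eqref{eq3} it coincides with the paper's argument (let $\lambda\to 0$ in the partial Fourier transform, then integrate the resulting Gaussian against $x^{2\alpha+1}\,dx$). For the other two identities you take a genuinely different route. For \eqref{eq1} the paper simply invokes Stempak's adjoint formula $\int_X T^{\eta}f(\xi)g(\xi)\,d\mu(\xi)=\int_X f(\xi)T^{\eta^{\ast}}g(\xi)\,d\mu(\xi)$ with $g\equiv 1$, together with \eqref{eq3} and a change of variables for the second half; you instead re-derive the $T^\eta$-invariance of $\mu$ from scratch, which is more self-contained but forces you to track the normalization of $d\nu$. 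Your constant bookkeeping is right: $\tfrac{\alpha}{\pi}B(\alpha,\tfrac12)\cdot\tfrac{\omega_{2\alpha+1}}{\omega_{2\alpha}}=\tfrac{\alpha\Gamma(\alpha)}{\Gamma(\alpha+1)}=1$. One caveat: ``Euclidean translation invariance on $\R^{n}$ with $n=2\alpha+2$'' is literal only for integer $n$; for general $\alpha>-1/2$ you should justify the identity
\begin{equation*}
\int_0^\pi\!\!\int_0^\infty F\bigl(\sqrt{x^2+y^2-2xy\cos\theta}\bigr)y^{2\alpha+1}\sin^{2\alpha}\theta\,dy\,d\theta=\Bigl(\int_0^\pi\sin^{2\alpha}\theta\,d\theta\Bigr)\int_0^\infty F(r)r^{2\alpha+1}\,dr
\end{equation*}
directly, e.g.\ via the substitution $u=y\cos\theta$, $v=y\sin\theta$, which turns the left side into $\int_0^\infty\!\int_{\R}F(\sqrt{(u-x)^2+v^2})\,v^{2\alpha}\,du\,dv$ and reduces the claim to ordinary translation invariance in the $u$-variable. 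For \eqref{eq2} the paper argues by a change of variables using that $K_{t,s}$ is even in $w$, whereas your transform argument ($\widehat{f\ast g}=\widehat f\,\widehat g$ plus injectivity from Proposition \ref{inverse_theorem}) proves the stronger statement that the convolution is commutative for arbitrary $f,g$ (the Laguerre hypergroup is commutative), so evenness is not needed; this is cleaner, at the mild cost of having to know that $f\ast K_{t,s}$ and $K_{t,s}\ast f$ lie in a class on which the Laguerre transform is injective.
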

\begin{proof}
We begin with the definition\[\int_{-\infty}^\infty K_{t,s}(x,w)e^{i\lambda w}\,dw = h^\lambda_{t}(x)\left(\frac{\lambda t}{\sinh \lambda t}\right)^{s+1}.\] Making $\lambda$ go to $0$, we get \[\int_{-\infty}^\infty K_{t,s}(x,w)\,dw = \frac{2}{\Gamma(\alpha+1)}\frac{1}{(2t)^{\alpha+1}}e^{-\frac{x^2}{2t}}.\] Therefore, \begin{align*}\int_X K_{t,s}(x,w)\,d\mu(x,w) &= \int_0^\infty \frac{2}{\Gamma(\alpha+1)}\frac{1}{(2t)^{\alpha+1}}e^{-\frac{x^2}{2t}} x^{2\alpha+1}\,dx\\
&= \int_0^\infty \frac{2}{\Gamma(\alpha+1)}e^{-x^2} x^{2\alpha+1}\,dx\\
&=1.
\end{align*}
Next, using \cite{stempak1}[Lemma 3.1], we have \[\int_X T^{\eta}f(\xi)g(\xi)\,d\mu(\xi) = \int_X f(\xi)T^{\eta^\ast}g(\xi)\,d\mu(\xi),\] where $(x,w)^\ast = (x,-w)$. Take $g = 1$ and $f = K_{t,s}$ and use \eqref{eq3} to conclude \[\int_X T^\eta K_{t,s}(\xi)\,d\mu(\xi) = 1.\] On the other hand, \[\int_X T^\eta K_{t,s}(\xi)\,d\mu(\xi) = \int_X T^{\xi^\ast} K_{t,s}(\eta^{\ast})\,d\mu(\xi)\] is just a change of variables; combining with the fact that $K_{t,s}$ is an even function in the second variable, we conclude second half of the \eqref{eq1}.\\

Finally, \eqref{eq2} follows from the change of variable and the fact that $K_{t,s}$ is an even function in the second variable.
\end{proof}

For $0<s<1$, we define
\begin{equation}
K_s(x,w) = \int_0^\infty K_{t,s}(x,w)t^{-s-1}\,dt.
\end{equation}

We will show $K_{s}$ is a positive function, more precisely:
\begin{proposition}\label{explicit modified kernel}
For $0<s<1$, we have \[K_s(x,w) = \frac{2^{2\alpha+2s+3}\left(\Gamma\Big(\frac{\alpha+s+2}{2}\Big)\right)^2}{\pi\Gamma(\alpha+1)}\frac{1}{(x^4+4w^2)^{\frac{s+\alpha+2}{2}}}.\] 
\end{proposition}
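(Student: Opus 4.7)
The strategy is to first compute the partial Fourier transform $K_s^\lambda(x)=\int_0^\infty K_{t,s}^\lambda(x)\,t^{-s-1}\,dt$ in closed form in terms of a modified Bessel function of the second kind, then recover $K_s(x,w)$ by Fourier inversion in $w$ using a classical Bessel--cosine identity.

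\emph{Step 1 (collapse the $t$-integral).} Using the explicit expression for $h_t^\lambda$ from Proposition \ref{heat_kernel_expression} together with the defining relation \eqref{modified kernel}, the substitution $r=|\lambda|t$ is made for $\lambda\ne 0$; the factors of $t$ coming from $(|\lambda|t/\sinh|\lambda|t)^{s+1}$ and $t^{-s-1}$ cancel, and one is left with
\[
K_s^\lambda(x)\;=\;\frac{|\lambda|^{\alpha+s+1}}{2^{\alpha}\,\Gamma(\alpha+1)}\int_0^\infty \frac{e^{-\frac{|\lambda|x^{2}}{2}\coth r}}{(\sinh r)^{\alpha+s+2}}\,dr.
\]

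\emph{Step 2 (reduce to a Bessel function).} The substitution $u=\coth r$, which gives $\sinh r=(u^{2}-1)^{-1/2}$ and $dr=-du/(u^{2}-1)$, converts the integral above into
\[
\int_1^\infty (u^{2}-1)^{(\alpha+s)/2}\,e^{-\frac{|\lambda|x^{2}}{2}u}\,du.
\]
Recognizing this as Schl\"afli's integral representation
\[
K_\nu(z)\;=\;\frac{\sqrt{\pi}\,(z/2)^{\nu}}{\Gamma(\nu+\tfrac12)}\int_1^\infty e^{-zu}(u^{2}-1)^{\nu-1/2}\,du,\qquad \Re\nu>-\tfrac12,\ \Re z>0,
\]
with $\nu=(\alpha+s+1)/2$ and $z=|\lambda|x^{2}/2$, yields an explicit formula
\[
K_s^\lambda(x)\;=\;\frac{2^{s+1}\,\Gamma\!\left(\frac{\alpha+s+2}{2}\right)}{\sqrt{\pi}\,\Gamma(\alpha+1)}\,\frac{|\lambda|^{(\alpha+s+1)/2}}{x^{\alpha+s+1}}\,K_{(\alpha+s+1)/2}\!\left(\frac{|\lambda|x^{2}}{2}\right).
\]

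\emph{Step 3 (Fourier invert in $w$).} Since $K_s^\lambda(x)$ is even in $\lambda$, Fourier inversion reads
\[
K_s(x,w)\;=\;\frac{1}{\pi}\int_0^\infty K_s^\lambda(x)\cos(\lambda w)\,d\lambda,
\]
and the $\lambda$-integral is evaluated by the classical identity
\[
\int_0^\infty \lambda^{\nu}\,K_\nu(a\lambda)\cos(\lambda w)\,d\lambda\;=\;\frac{\sqrt{\pi}\,(2a)^{\nu}\,\Gamma(\nu+\tfrac12)}{2\,(a^{2}+w^{2})^{\nu+1/2}},\qquad \Re\nu>-\tfrac12,
\]
applied with $\nu=(\alpha+s+1)/2$ and $a=x^{2}/2$. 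Using $2a=x^{2}$ and $a^{2}+w^{2}=(x^{4}+4w^{2})/4$ produces the closed-form expression for $K_s(x,w)$; positivity is then immediate since $K_\nu>0$ and the prefactor is positive.

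\emph{Main obstacle.} No conceptual difficulty arises; the two tabulated Bessel identities are precisely what one needs, and the plan is essentially bookkeeping. The delicate point is tracking the powers of $2$ accumulated through $(x^{4}/4+w^{2})^{(\alpha+s+2)/2}$ and combining the product $\Gamma(\tfrac{\alpha+s+1}{2})\Gamma(\tfrac{\alpha+s+2}{2})$ via the Legendre duplication formula $\Gamma(z)\Gamma(z+\tfrac12)=2^{1-2z}\sqrt{\pi}\,\Gamma(2z)$; as a cross-check one can verify the formula by integrating in $w$ and comparing with the elementary computation of $K_s^0(x)=\int_0^\infty K_{t,s}^0(x)t^{-s-1}\,dt$, which reduces to a $\Gamma$-integral after the substitution $u=x^{2}/(2t)$.
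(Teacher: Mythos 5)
Your method is sound but genuinely different from the paper's. The paper takes the partial Fourier transform in $w$, writes $K_s^\lambda(x)$ as a double integral in $(t,\lambda)$, inverts the Fourier transform \emph{first}, exploits the homogeneity $K_s(x,wx^2)=x^{-2(\alpha+s+2)}K_s(1,w)$ to reduce to $K_s(1,w)$, evaluates the $\lambda$-integral by GR~3.944.6, and then works the remaining $t$-integral into an associated Legendre function via GR~3.663.1 and the special value $P_\nu^{-\nu}$. You instead collapse the $t$-integral immediately into a Macdonald function via Schl\"afli's (Basset's) representation, obtaining a clean closed form for $K_s^\lambda(x)$ in terms of $K_{(\alpha+s+1)/2}(|\lambda|x^2/2)$, and then finish with the standard cosine transform of $\lambda^\nu K_\nu(a\lambda)$. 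Your route is shorter, avoids the Legendre-function detour entirely, and the intermediate formula for $K_s^\lambda$ is of independent use; I have checked your Steps 1--3 against the definitions \eqref{modified kernel} and Proposition \ref{heat_kernel_expression} and each is correct (one small remark: with the identities as you state them the Gamma factors combine directly into $\bigl(\Gamma(\tfrac{\alpha+s+2}{2})\bigr)^2$, so no duplication formula is actually needed at the end).

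There is, however, a discrepancy you must confront: carrying your constants through to the end yields
\[
K_s(x,w)=\frac{2^{\alpha+2s+2}\left(\Gamma\big(\tfrac{\alpha+s+2}{2}\big)\right)^2}{\pi\Gamma(\alpha+1)}\,(x^4+4w^2)^{-\frac{\alpha+s+2}{2}},
\]
whose power of $2$ differs from the stated $2^{2\alpha+2s+3}$ by a factor $2^{\alpha+1}$. This is not an error on your side. The paper's proof drops the factor $2^{-(\alpha+1)}$ coming from $\bigl(\tfrac{\lambda}{2\sinh\lambda t}\bigr)^{\alpha+1}$ when it passes to its second display, and the omission propagates to the final constant. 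A direct cross-check at $\lambda=0$ (which you propose yourself) confirms your value: $K_s^0(x)=\int_0^\infty h_t^0(x)t^{-s-1}\,dt=\tfrac{2^{s+1}\Gamma(\alpha+s+1)}{\Gamma(\alpha+1)}x^{-2(\alpha+s+1)}$ with $h_t^0(x)=\tfrac{2}{\Gamma(\alpha+1)}(2t)^{-(\alpha+1)}e^{-x^2/2t}$ as in Lemma \ref{properties of modified kernel}, and integrating the closed form over $w$ matches this only with the constant $2^{\alpha+2s+2}$. Since the Proposition is used downstream only for the positivity of $K_s$, the discrepancy is harmless to the main theorems, but you should state explicitly that your computation corrects the constant rather than leaving the mismatch unaddressed.
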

\begin{proof}
Calculations are borrowed from \cite{thangaveluroncal}[Proposition 4.2]. We repeat for the sake of completeness. We start with the expression
$$
\int_{-\infty}^{\infty}K_s(x,w)e^{i\lambda w}\,dw=\int_0^{\infty}h_t^{\lambda}(x)\Big(\frac{t|\lambda|}{\sinh t|\lambda|}\Big)^{s+1}t^{-s-1}\,dt.
$$
Using Proposition \ref{heat_kernel_expression}, and since the functions involved are even in $\lambda$, we have
$$
\int_{-\infty}^{\infty}K_s(x,w)e^{i\lambda w}\,dw=\frac{2}{\Gamma(\alpha+1)}\int_0^{\infty}\Big(\frac{\lambda}{\sinh t\lambda}\Big)^{\alpha+s+2}e^{-\frac 1 2\lambda(\coth t\lambda)x^2}\,dt.
$$
As the Fourier transform of $K_s$ in the central variable $w$ is an even function of $\lambda$ we have, after taking the Fourier transform in the variable $\lambda$,
$$
K_s(x,w)=\frac{2}{\pi\Gamma(\alpha+1)}\int_0^{\infty}\int_0^{\infty}(\cos \lambda w)\Big(\frac{\lambda}{\sinh t\lambda}\Big)^{\alpha+s+2}e^{-\frac12\lambda(\coth t\lambda)x^2}\,d\lambda\,dt.
 $$
By the change of variables $\lambda\to \lambda x^{-2}$, $t\to tx^2$, we obtain
\begin{equation}
\label{eq:homogeneity}
K_s(x,wx^2)=x^{-2(\alpha+s+2)}K_s(1,w).
\end{equation}
Thus
\begin{align*}
K_s(1,w)&=\frac{2}{\pi\Gamma(\alpha+1)}\int_0^{\infty}\int_0^{\infty}(\cos \lambda w)\Big(\frac{\lambda}{\sinh t \lambda}\Big)^{\alpha+s+2}e^{-\frac{\lambda}{2}(\coth t\lambda)}\,dt\,d\lambda\\
&=\frac{2}{\pi\Gamma(\alpha+1)}\int_0^{\infty} \Big( \int_0^{\infty}(\cos \lambda w) \lambda^{\alpha+s+1}e^{-\frac{\lambda}{2}(\coth t)} \,d\lambda \Big) (\sinh t )^{-(\alpha+s+2)} \,dt.
\end{align*}
The integral in $\lambda$ can be evaluated by using \cite[p.~498,
~3.944.6]{GR}:
\begin{equation}
\label{eq:GR1}
\int_0^{\infty}x^{\mu-1}e^{-\beta x}(\cos \delta x)\,dx=\frac{\Gamma(\mu)}{(\delta^2+\beta^2)^{\mu/2}}\cos\Big(\mu\arctan \frac{\delta}{\beta}\Big),
\end{equation}
valid for $\operatorname{Re}\mu>0$, $\operatorname{Re}\beta>|\operatorname{Im}\delta|$. Taking with $\mu=\alpha+s+2$, $\beta=\frac{1}{2}(\coth t)$ and $\delta=w$. Then, we get
$$
\int_0^{\infty}(\cos \lambda w)\lambda^{\alpha+s+1}e^{-\frac{\lambda}{2}(\coth t)}\,d\lambda=\frac{\Gamma(\alpha+s+2) \cos \Big((\alpha+s+2)\arctan\Big(\frac{2w}{\coth t}\Big)\Big)}{\Big(w^2+\frac{1}{4}\coth^2t\Big)^{\frac{\alpha+s+2}{2}}}.
$$
Thus
\begin{equation}
\label{eq:1Hom}
K_s(1,w)= \frac{2\Gamma(\alpha+s+2)}{\pi\Gamma(\alpha+1)}\int_0^{\infty}\frac{\cos \Big((\alpha+s+2)\arctan\Big(\frac{2w}{\coth t}\Big)\Big) }{\Big(w^2+\frac{1}{4}\coth^2t\Big)^{\frac{\alpha+s+2}{2}}}(\sinh t)^{-(\alpha+s+2)}\,dt.
\end{equation}
With the change of variables $u=\frac{2w}{\coth t}$ we have that the latter integral equals
\begin{align*}
\int_0^{2w}&\Big(\frac{u^2}{4w^2-u^2}\Big)^{-\frac{(\alpha+s+2)}{2}}
\Big(w^2+\frac{4w^2}{4u^2}\Big)^{-\frac{(\alpha+s+2)}{2}}
\cos[(\alpha+s+2)\arctan u]\frac{2w}{4w^2-u^2}\,du\\
&=2w^{-(\alpha+s+1)}
\int_0^{2w}(4w^2-u^2)^{\frac{\alpha+s}{2}}(1+u^2)^{-\frac{\alpha+s+2}{2}}\cos[(\alpha+s+2)\arctan u]\,du\\
&=2^{\alpha+s+1}w^{-1}
\int_0^{2w}\Big(1-\frac{u^2}{4w^2}\Big)^{\frac{\alpha+s}{2}}(1+u^2)^{-\frac{\alpha+s+2}{2}}\cos[(\alpha+s+2)\arctan u]\,du.
\end{align*}
Thus, with this and \eqref{eq:1Hom} we have
\begin{equation}
\label{eq:2Hom}
\mathcal{K}_s(1,w)= \frac{2^{\alpha+s+2} \Gamma(\alpha+s+2)}{\pi\Gamma(\alpha+1)}  w^{-1} I,
\end{equation}
where
$$
I:=\int_0^{2w}\Big(1-\frac{u^2}{4w^2}\Big)^{\frac{\alpha+s}{2}}(1+u^2)^{-\frac{\alpha+s+2}{2}}\cos[(\alpha+s+2)\arctan u]\,du.
$$
Now we will see that the above integral can be explicitly computed in terms of Legendre functions. Making a second change of variable $\arctan u=z$, the integral $I$ becomes
$$
I=\int_0^{\arctan2w}\Big(\cos^2z-\frac{\sin^2z}{4w^2}\Big)
^{\frac{\alpha+s}{2}}\cos[(\alpha+s+2)z]\,dz.
$$
We can rewrite the above integral as
\begin{align*}
I&=\int_0^{\arctan2w}\Big(\frac{1+\cos2z}{2}-\frac{1-\cos2z}{2\cdot4w^2}\Big)^{\frac{\alpha+s}{2}}
\cos[(\alpha+s+2)z]\,dz\\
&=2^{-\frac{\alpha+s}{2}}\int_0^{\arctan2w}\bigg((\cos 2z)\Big(1+\frac{1}{4w^2}\Big)-\Big(\frac{1}{4w^2}-1\Big)\bigg)^{\frac{\alpha+s}{2}}
\cos[(\alpha+s+2)z]\,dz\\
&=\Big(\frac{1+4w^2}{8w^2}\Big)^{\frac{\alpha+s}{2}}\int_0^{\arctan2w}\Big(\cos2z-\frac{1-4w^2}{1+4w^2}\Big)^{\frac{\alpha+s}{2}}\cos[(\alpha+s+2)z]\,dz\\
&=\frac12\Big(\frac{1+4w^2}{8w^2}\Big)^{\frac{\alpha+s}{2}}\int_0^{2\arctan2w}(\cos\beta-\cos\gamma)^{\frac{\alpha+s}{2}}\cos\Big[\frac{(\alpha+s+2)}{2}\beta\Big]\,d\beta,
\end{align*}
where $\cos\gamma=\frac{1-16w^2}{1+16w^2}$. The integral can be evaluated using (\cite[p. 406, 3.663.1]{GR}):
\begin{equation}
\label{eq:GR2}
\int_0^u(\cos x-\cos u)^{\nu-\frac12}\cos ax\,dx=\sqrt{\frac{\pi}{2}}(\sin u)^{\nu}\Gamma\Big(\nu+\frac12\Big)P_{a-\frac12}^{-\nu}(\cos u),
\end{equation}
valid for $ \operatorname{Re}\nu>-\frac12$, $a>0$, $0<u<\pi$, where $P_{a-\frac12}^{-\nu}$ is an associated Legendre function of the first kind (see for instance \cite[Sections 8.7-8.8]{GR}). Also Recall the following representation for the associated Legendre function (\cite[p. 969, 8.755]{GR})
\begin{equation}
\label{eq:Legendre}
P_{\nu}^{-\nu}(\cos\varphi)=\frac{\big(\frac{\sin\varphi}{2}\big)^{\nu}}{\Gamma(1+\nu)}.
\end{equation}
Taking $\nu=\frac{\alpha+s+1}{2}$ and $a=\frac{\alpha+s+2}{2}$ in \eqref{eq:GR2} and using the representation for the associated Legendre function \eqref{eq:Legendre}, the latter integral becomes
\begin{align*}
\sqrt{\frac{\pi}{2}}(\sin \gamma)^{\frac{\alpha+s+1}{2}}\Gamma\Big(\frac{\alpha+s+2}{2}\Big)P_{\frac{\alpha+s+1}{2}}^{-\frac{\alpha+s+1}{2}}(\cos \gamma)&=\sqrt{\frac{\pi}{2}}\Gamma\Big(\frac{\alpha+s+2}{2}\Big)(\sin \gamma)^{\frac{\alpha+s+1}{2}}
\frac{(\sin\gamma)^{\frac{\alpha+s+1}{2}}}{2^{\frac{\alpha+s+1}{2}}\Gamma\big(\frac{\alpha+s+3}{2}\big)}\\
&=\sqrt{\frac{\pi}{2}}\frac{\Gamma\big(\frac{\alpha+s+2}{2}\big)}
{2^{\frac{\alpha+s+1}{2}}\Gamma\big(\frac{\alpha+s+3}{2}\big)}(\sin^2\gamma)^{\frac{\alpha+s+1}{2}}\\
&=\sqrt{\frac{\pi}{2}}\frac{\Gamma\big(\frac{\alpha+s+2}{2}\big)}{2^{\frac{\alpha+s+1}{2}
\Gamma\big(\frac{\alpha+s+3}{2}\big)}}\Big(\frac{4w}{1+4w^2}\Big)^{\alpha+s+1},
\end{align*}
because $\sin^2\gamma=\frac{16w^2}{(1+4w^2)^2}$. This gives
\begin{equation}
\label{eq:3Hom}
 I=\frac12  \sqrt{\frac{\pi}{2}}\frac{\Gamma\big(\frac{\alpha+s+2}{2}\big)}{2^{\frac{\alpha+s+1}{2}
\Gamma\big(\frac{\alpha+s+3}{2}\big)}}\Big(\frac{1+4w^2}{8w^2}\Big)^{\frac{\alpha+s}{2}}\Big(\frac{4w}{1+4w^2}\Big)^{\alpha+s+1}= \sqrt{\pi}\frac{\Gamma\big(\frac{\alpha+s+2}{2}\big)}{
\Gamma\big(\frac{\alpha+s+3}{2}\big)}w(1+4w^2)^{-\frac{\alpha+s+2}{2}}.
\end{equation}
Finally, plugging \eqref{eq:3Hom} into  \eqref{eq:2Hom}, we have
$$
K_s(1,w)= \frac{2^{\alpha+s+2} \Gamma(\alpha+s+2)}{\pi\Gamma(\alpha+1)}\sqrt{\pi}\frac{\Gamma\big(\frac{\alpha+s+2}{2}\big)}{
\Gamma\big(\frac{\alpha+s+3}{2}\big)}(1+4w^2)^{-\frac{\alpha+s+2}{2}},
$$
or, by \eqref{eq:homogeneity}
$$
K_s(x,w)=x^{-2(\alpha+s+2)}K_s\Big(1,\frac{w}{x^2}\Big)=c_{\alpha,s} (x^4+4w^2)^{-\frac{\alpha+s+2}{2}}.
$$
where the constant $ c_{\alpha,s} $ is given by
$$
c_{\alpha,s} = \frac{2^{\alpha+s+2} \Gamma(\alpha+s+2)}{\sqrt{\pi}\Gamma(\alpha+1)}\frac{\Gamma\big(\frac{\alpha+s+2}{2}\big)}{
\Gamma\big(\frac{\alpha+s+3}{2}\big)}.
$$
By using Legendre's duplication formula 
\begin{equation}
\label{eq:LegendreDup}
\sqrt{\pi} \Gamma(2z) = 2^{2z-1}\Gamma(z)\Gamma\Big(z+\frac12\Big)
\end{equation}
with $z=\frac{\alpha+s+2}{2}$, and after simplification, we get
$$
c_{\alpha,s} = \frac{2^{2\alpha+2s+3}\left(\Gamma\left(\frac{\alpha+s+2}{2}\right)\right)^2}{\pi\Gamma(\alpha+1)}.
$$
This completes the proof of the Proposition.
\end{proof}

We have shown that $K_{s}$ is a positive function. Moreover, generalized translation is a positive operator, see \cite{stempak1}[Proposition 3.2]. Therefore, $T^\eta K_s\geqslant 0$. In other words,
\begin{equation}\label{positivity}
\int_X T^\eta K_{t,s}(\xi)t^{-s-1}\,dt \geqslant 0,\quad \forall\xi,\eta\in X.
\end{equation}

\subsection{Fractional powers of Grushin operator}
Let $H = \mathbb{R}^n\times\mathbb{R}$ with the understaning that $(x,w)\in H$ means $x\in\R^n$ and $w\in\R$. We equip $H$ with the measure $d\mu(x,w) = dx\,dw$, where $dx$ and $dw$ are the usual Lebesgue measures on $\mathbb{R}^n$ and $\mathbb{R}$. We define \textit{Grushin operator} $\mathcal{G}$ on $H$ by\begin{equation}
\mathcal{G} =-\frac{1}{2}\left(\sum_{j=1}^n\frac{\partial^2}{\partial x_j^2}+|x|^2\frac{\partial^2}{\partial w^2}\right).
\end{equation} For $\lambda\in\R\setminus\{0\}$, we define the\textit{scaled Hermite operator} $\H(\lambda)$ on $\mathbb{R}^n$ by \begin{equation}
\H(\lambda) = -\left(\sum_{j=1}^n\frac{\partial^2}{\partial x_j^2}-\lambda^2|x|^2\right)
\end{equation} For multi-index $\beta\in\mathbb{N}^n$, define $\Phi_{\beta}(x) = h_{\beta_1}(x_1)h_{\beta_2}(x_2)\cdots h_{\beta_{n}}(x_n)$, where $\beta=(\beta_1,\cdots,\beta_n)$ and $h_{\beta_i}$ are normalized Hermite functions. Further, for $\lambda\in\R\setminus\{0\}$ define \[\Phi^\lambda_{\beta}(x) = |\lambda|^{\frac{n}{4}}\Phi_{\beta}(\sqrt{|\lambda|}x).\] The collection $\{\Phi^\lambda_{\beta}\}_{\beta\in\mathbb{N}^n}$ forms an orthonormal basis for $L^2(\mathbb{R}^n)$, see \cite{thangavelu3}[Theorem 1.2.2]. Also they are eigenfunctions for the scaled Hermite operator, that is
\begin{equation}
\H(\lambda)\Phi^\lambda_{\beta} = (2|\beta|+n)|\lambda|\Phi^\lambda_{\beta}.
\end{equation} 
For $\lambda\in\R\setminus\{0\}$ and $k=0,1,2,\cdots$, define $\P_{k}(\lambda)$ as projections of $L^2(\R^n)$ onto $E^\lambda_{k}$, the eigenspace corresponding to eigenvalue $(2k+n)|\lambda|$. In other words, \begin{equation}
\P_{k}(\lambda)f = \sum_{|\beta|= k}\langle f,\Phi^\lambda_{\beta}\rangle \Phi^\lambda_\beta.
\end{equation} Thus we have
\begin{equation}\label{Hermite_spectral_decomposition}
\H(\lambda) = \sum_{k=0}^\infty(2k+n)|\lambda|\P_k(\lambda).
\end{equation} Finally, using the Fourier transform and the above spectral decomposition of scaled Hermite operator (Equation \eqref{Hermite_spectral_decomposition}), we have the following spectral decomposition of $\G$:
\begin{equation}
\G f(x,w)= \frac{1}{2\pi}\int_\R \sum_{k=0}^\infty (k+n/2)|\lambda|\P_k(\lambda)f^\lambda(x)e^{-i\lambda w}\,d\lambda. 
\end{equation}
Therefore, a natural way to define fractional powers of Grushin operator is via spectral decomposition: \begin{equation}
\G^sf(x,w)= \frac{1}{2\pi}\int_\R \sum_{k=0}^\infty ((k+n/2)|\lambda|)^s \P_k(\lambda)f^\lambda(x)e^{-i\lambda w}\,d\lambda.
\end{equation} However, it is convenient to with following modified fractional powers of $\mathcal{G}$. For $0<s<1$, we define $\mathcal{G}_s$ by \begin{equation}
\G_s f(x,w)= \frac{1}{2\pi}\int_\R \sum_{k=0}^\infty (2|\lambda|)^s\frac{\Gamma(\frac{2k+n}{4}+\frac{1+s}{2})}{\Gamma(\frac{2k+n}{4}+\frac{1-s}{2})} \P_k(\lambda)f^\lambda(x)e^{-i\lambda w}\,d\lambda.
\end{equation}
Also we define $W^{s,2}(H)$ as the space consists of all those functions $f$ in $L^2(H)$ such that $\G_sf\in L^2(H)$ too.
\subsection{Spherical harmonics and Hecke-Bochner formula}\label{Spherical_harmonics}
Let us quickly recall some facts about spherical harmonics and solid harmonics. We refer to \cite{stein1}[Chapter 4] for missing details. Let $\mathfrak{H}_m$ denote the space of spherical harmonics of degree $m$. Let $\{Y_{m,j}\}_{j=1}^{a_m}$ denotes the orthogonal basis of $\mathfrak{H}_m$, where $a_m$ denotes the dimension of $\mathfrak{H}_m$. We know that  $L^2(\mathbb{S}^{n-1}) = \bigoplus_{m=0}^\infty \mathfrak{H}_m$ and the collection $\{Y_{m,j}\}$, for $j = 1,2,\cdots,a_m$ and $m= 0,1,2,\cdots$, forms orthonormal basis for $L^2(\mathbb{S}^{n-1})$. Note that $\mathbb{S}^{n-1}$ denotes the unit sphere in $\R^n$. Define solid harmonics $P_{m,j}(x) = |x|^mY_{m,j}(x/|x|)$ for $j = 1,2\cdots, a_m$ and $m=0,1,\cdots$. Let $\mathfrak{h}_m$ denotes the space consisting of linear combination of functions of the form $f(|x|)P(x)$, where $f$ varies over radial functions and $P\in \mathfrak{H}_m$, with the stipulation that each $f(|x|)P(x)\in L^2(\R^n)$. With these definitions, we have $L^2(\R^n) = \bigoplus_{m=0}^\infty \mathfrak{h}_m$. So for $f\in L^2(\R^{n+1})$, we have 
\begin{equation}
f(x,w) = \sum_{m=0}^\infty\sum_{j=1}^{a_m}f_{m,j}(|x|,w)P_{m,j}(x),
\end{equation}
where $f_{m,j}(|x|,w) = \int_{\mathbb{S}^{n-1}}f(|x|\omega,w)P_{m,j}(|x|\omega)\,d\omega$.\\

Finally, we recall Hecke-Bochner formula, which describes how Hermite projections act on solid harmonics, proof of which can be found in the book \cite{thangavelu1}[Theorem 3.4.1].
\begin{proposition}\label{Hermite_projection_heckhe_bockner}
Suppose $f\in L^2(\mathbb{R}^{n})$ is such that $f=gP$, where $g$ is radial and $P$ is a solid harmonics of degree $m$. Then we have \[\P_{2k+m}(\lambda)f(x) = R^\lambda_{k,m}(g)\phi^{\alpha}_{k,\lambda}(|x|)P(x)\] where \[R^\lambda_{k,m}(g) = \frac{2|\lambda|^{\alpha+1}\Gamma(k+1)}{\Gamma(\alpha+k+1)}\int_0^\infty g(s)\phi^\alpha_{k,\lambda}(s)s^{2\alpha+1}\,ds\] and $\alpha = \frac{n}{2}+m-1$. For other values of $j$, $\P_j(\lambda)f = 0$.
\end{proposition}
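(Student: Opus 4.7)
The plan is to exploit the $O(n)$-invariance of $\H(\lambda)$ to reduce the Hermite projection onto $E^\lambda_{2k+m}$ to a one-dimensional Laguerre eigenvalue problem, then extract the constant $R^\lambda_{k,m}(g)$ by a standard Hilbert-space projection argument.

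First I would show that the subspace $\mathfrak{h}_m$ is $\H(\lambda)$-invariant and that the restriction is a radial Laguerre-type operator. Writing $r=|x|$ and using $\nabla g(|x|)=g'(r)x/r$, the Euler identity $x\cdot\nabla P=mP$ (since $P$ is homogeneous of degree $m$), and the harmonicity $\sum_j\partial^2 P/\partial x_j^2=0$, a short computation gives
\[
\H(\lambda)\bigl(g(r)P(x)\bigr)=\left(-g''(r)-\frac{2\alpha+1}{r}g'(r)+\lambda^2 r^2 g(r)\right)P(x),
\]
with $\alpha=n/2+m-1$ (so that $2\alpha+1=2m+n-1$). Feeding $g=\phi^\alpha_{k,\lambda}$ into the bracket and invoking the Laguerre ODE $xy''+(\alpha+1-x)y'+ky=0$ for $L^\alpha_k$, a direct substitution shows that the bracketed expression equals $2|\lambda|(2k+\alpha+1)\phi^\alpha_{k,\lambda}=(2(2k+m)+n)|\lambda|\,\phi^\alpha_{k,\lambda}$. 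Hence $\phi^\alpha_{k,\lambda}(|x|)P(x)\in E^\lambda_{2k+m}$.

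Next I would compute the projection. Fixing an orthonormal basis $\{Y_{m,j}\}_{j=1}^{a_m}$ of $\mathfrak{H}_m$ with associated solid harmonics $P_{m,j}(x)=r^m Y_{m,j}(\omega)$ and expanding $P=\sum_j c_jP_{m,j}$, it suffices by linearity to treat each summand $gP_{m,j}$. Now $E^\lambda_{2k+m}$ decomposes according to the grading $L^2(\R^n)=\bigoplus_{m'\geq 0}\mathfrak{h}_{m'}$; by orthogonality of spherical harmonics of distinct degrees on $\mathbb{S}^{n-1}$, only the $m'=m$ slice has nonzero inner product with $gP_{m,j}$, and within that slice the eigenspace is spanned by $\{\phi^\alpha_{k,\lambda}(r)P_{m,j'}\}_{j'=1}^{a_m}$, among which only the $j'=j$ term survives (again by orthogonality of $\{Y_{m,j'}\}$). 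Passing to polar coordinates $dx=r^{n-1}dr\,d\omega$ and using $r^{2m+n-1}=r^{2\alpha+1}$ gives
\[
\langle gP_{m,j},\,\phi^\alpha_{k,\lambda}P_{m,j}\rangle_{L^2(\R^n)}=\int_0^\infty g(r)\phi^\alpha_{k,\lambda}(r)\,r^{2\alpha+1}\,dr,
\]
and combining with the norm $\|\phi^\alpha_{k,\lambda}\|^2_{L^2(\R^+,r^{2\alpha+1}dr)}=\Gamma(\alpha+k+1)/(2|\lambda|^{\alpha+1}\Gamma(k+1))$ from Proposition \ref{orthogonality_of_Laguerre_functions} reconstitutes exactly the constant $R^\lambda_{k,m}(g)$. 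Summing over $j$ recovers $\P_{2k+m}(\lambda)(gP)=R^\lambda_{k,m}(g)\phi^\alpha_{k,\lambda}(|x|)P(x)$. The vanishing for $j\notin\{m,m+2,m+4,\ldots\}$ is immediate from the same analysis: the only eigenvalues of $\H(\lambda)$ on $\mathfrak{h}_m$ are $(2(2k+m)+n)|\lambda|$ for $k\geq 0$.

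The main obstacle is justifying that the system $\{\phi^{\alpha(m')}_{k,\lambda}(r)P_{m',j}(x)\}$ actually \emph{exhausts} each Hermite eigenspace $E^\lambda_j$, rather than spanning only a proper subspace. A clean resolution is a dimension count: each $\mathfrak{h}_{m'}$ is $\H(\lambda)$-invariant, and within $\mathfrak{h}_{m'}$ the radial family $\{\tilde\phi^{\alpha(m')}_{k,\lambda}\}_{k\geq 0}$ is a complete orthonormal basis on $L^2(\R^+,r^{2\alpha(m')+1}dr)$ by Proposition \ref{orthogonality_of_Laguerre_functions}, so summing dimensions yields $\sum_{k,m':\,2k+m'=j}a_{m'}=\binom{j+n-1}{n-1}=\dim E^\lambda_j$, forcing equality of the two spectral decompositions and completing the argument.
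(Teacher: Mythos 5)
Your argument is correct and complete. Note that the paper does not actually prove this proposition: it simply cites Thangavelu, \emph{Lectures on Hermite and Laguerre expansions}, Theorem 3.4.1, so there is no in-paper proof to compare against; what you have written is a self-contained substitute. The three ingredients are all sound: (i) the computation $\H(\lambda)(gP)=\bigl(-g''-\tfrac{2\alpha+1}{r}g'+\lambda^2r^2g\bigr)P$ correctly uses harmonicity of $P$, Euler's identity, and $\Delta(g(r))=g''+\tfrac{n-1}{r}g'$ to produce the shifted parameter $2\alpha+1=n+2m-1$, and the Laguerre ODE then gives the eigenvalue $2|\lambda|(2k+\alpha+1)=(2(2k+m)+n)|\lambda|$, so $\phi^\alpha_{k,\lambda}(|x|)P(x)\in E^\lambda_{2k+m}$; (ii) the projection coefficient is correctly assembled from $\|\phi^\alpha_{k,\lambda}\|^2_{L^2(r^{2\alpha+1}dr)}=\Gamma(\alpha+k+1)/(2|\lambda|^{\alpha+1}\Gamma(k+1))$, which is exactly the normalization in Proposition \ref{orthogonality_of_Laguerre_functions}, yielding $R^\lambda_{k,m}(g)$; and (iii) you correctly identified the only real gap --- that the family $\{\phi^{\alpha(m')}_{k,\lambda}(r)P_{m',j'}\}_{2k+m'=j}$ might a priori span only part of $E^\lambda_j$ --- and closed it with the dimension count $\sum_{2k+m'=j}a_{m'}=\binom{j+n-1}{n-1}=\dim E^\lambda_j$, which is the classical decomposition of homogeneous polynomials of degree $j$ into $\bigoplus_k|x|^{2k}\mathfrak{H}_{j-2k}$. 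An equivalent, slightly softer way to close that gap, which avoids the combinatorial identity, is to observe that the whole family over all $(k,m',j')$ is a complete orthogonal system in $L^2(\R^n)$ (completeness of the Laguerre basis in each $L^2(\R^+,r^{2\alpha(m')+1}dr)$ times completeness of spherical harmonics), and a complete orthogonal system of eigenfunctions of a self-adjoint operator with discrete spectrum necessarily exhausts every eigenspace. Either route is fine; your proof stands as written (modulo the implicit normalization $\|Y_{m,j}\|_{L^2(\mathbb{S}^{n-1})}=1$, which the paper also assumes).
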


\section{Hardy's Inequality for Generalized Subaplacian}

For $-1<s<1$ and $\delta>0$, we define
\begin{equation}\label{function u}
u_{s,\delta}(x,w) = \left(\left(\delta+\frac{x^2}{2}\right)^2+w^2\right)^{-\frac{s+\alpha+2}{2}}.
\end{equation}

\begin{proposition}\label{main_proposition}
For $0<s<1$, we have \[\L_su_{-s,\delta}(\xi) = (4\delta)^s \left(\frac{\Gamma(\frac{\alpha+s+2}{2})}{\Gamma(\frac{\alpha-s+2}{2})}\right)^2 u_{s,\delta}(\xi).\]
\end{proposition}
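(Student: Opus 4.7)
The natural strategy is to pass to the spectral side via the Laguerre transform, where $\L_s$ acts as the multiplier $(2|\lambda|)^s \Gamma(\mu+\tfrac{1+s}{2})/\Gamma(\mu+\tfrac{1-s}{2})$ with $\mu=(2k+\alpha+1)/2$. Since the Laguerre transform is injective, the proposition is equivalent to the pointwise identity
\[
(2|\lambda|)^s \frac{\Gamma(\mu+\tfrac{1+s}{2})}{\Gamma(\mu+\tfrac{1-s}{2})}\,\widehat{u_{-s,\delta}}(\lambda,k) \;=\; (4\delta)^s \left(\frac{\Gamma(\nu_s)}{\Gamma(\nu_{-s})}\right)^2 \widehat{u_{s,\delta}}(\lambda,k),
\]
where I set $\nu_r:=(\alpha+r+2)/2$. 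The crucial bookkeeping observation is that $\mu+(1-s)/2=\nu_{-s}+k$ and $\mu+(1+s)/2=\nu_s+k$, so the Gamma ratio on the left will be absorbed cleanly once $\widehat{u_{\pm s,\delta}}$ is computed.

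The core of the proof is therefore an explicit evaluation of $\widehat{u_{r,\delta}}(\lambda,k)$. First I take the inverse Fourier transform in $w$ using the classical identity $\int_\R(a^2+w^2)^{-\nu}e^{i\lambda w}\,dw = \frac{2\sqrt\pi}{\Gamma(\nu)}(|\lambda|/(2a))^{\nu-1/2}K_{\nu-1/2}(a|\lambda|)$ with $a=\delta+x^2/2$, $\nu=\nu_r$; then I insert Basset's representation $K_{\nu-1/2}(z)=\frac{\sqrt\pi(z/2)^{\nu-1/2}}{\Gamma(\nu)}\int_1^\infty e^{-zt}(t^2-1)^{\nu-1}\,dt$ to turn the Bessel function into an integral. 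After swapping the order of integration, the $x$-integral reduces to the standard Laguerre moment $\int_0^\infty L_k^\alpha(z)e^{-bz}z^\alpha\,dz = \frac{\Gamma(\alpha+k+1)}{k!}\cdot\frac{(b-1)^k}{b^{\alpha+k+1}}$ with $b=(t+1)/2$, which follows from the Laguerre generating function. The substitution $t-1=2v$ then matches the remaining integral against the Tricomi integral representation $U(a,b,z)=\frac{1}{\Gamma(a)}\int_0^\infty e^{-zv}v^{a-1}(1+v)^{b-a-1}\,dv$; after all powers of $2$ accumulated along the way cancel, I expect the clean formula
\[
\widehat{u_{r,\delta}}(\lambda,k) \;=\; \frac{\pi\,\Gamma(\alpha+1)\,\Gamma(\nu_r+k)}{\Gamma(\nu_r)^2}\,|\lambda|^r e^{-|\lambda|\delta}\,U(\nu_r+k,\,1+r,\,2|\lambda|\delta).
\]

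Plugging this into the spectral identity and cancelling the common factor $\pi\Gamma(\alpha+1)\Gamma(\nu_s+k)\Gamma(\nu_{-s})^{-2}e^{-|\lambda|\delta}$ collapses the whole statement to the single identity
\[
U(\nu_{-s}+k,\,1-s,\,2|\lambda|\delta) \;=\; (2|\lambda|\delta)^s\, U(\nu_s+k,\,1+s,\,2|\lambda|\delta).
\]
This is exactly Kummer's transformation $U(a,b,z)=z^{1-b}U(a-b+1,\,2-b,\,z)$ applied with $a=\nu_{-s}+k$ and $b=1-s$: since $\nu_s-\nu_{-s}=s$, one reads off $a-b+1=\nu_s+k$ and $2-b=1+s$, closing the argument in a single line.

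The main obstacle is the bookkeeping in the first step; the cancellation of all powers of $2$ is the nontrivial miracle that makes the Tricomi form so clean. The relevant convergence conditions (Basset needs $\nu_r>0$, the Tricomi integral needs $\nu_r+k>0$, and the classical Bessel Fourier formula holds for $\nu_r>1/2$ and extends distributionally for $\nu_r>0$) are all easily checked under $\alpha>-1/2$ and $0<s<1$. Once the Tricomi formula for $\widehat{u_{r,\delta}}$ is established, Kummer's transformation finishes the proof immediately.
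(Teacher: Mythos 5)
Your proposal is correct, and your intermediate formula for the Laguerre transform,
\[
\widehat{u_{r,\delta}}(\lambda,k)=\frac{\pi\,\Gamma(\alpha+1)\,\Gamma(\nu_r+k)}{\Gamma(\nu_r)^2}\,|\lambda|^r e^{-|\lambda|\delta}\,U(\nu_r+k,1+r,2|\lambda|\delta),\qquad \nu_r=\tfrac{\alpha+r+2}{2},
\]
agrees exactly with the coefficients $c^\lambda_{k,\delta}(r)$ computed in the paper; I checked that all powers of $2$ do cancel as you claim. The difference is in how that formula is obtained and in the language of the final identity. The paper expands $u^\lambda_{r,\delta}$ in Laguerre functions all at once by pairing the Laguerre generating function with a Plancherel-type identity for Laplace transforms (taken from Cowling--Haagerup), which produces the coefficients in terms of $L(a,b,c)=\int_0^\infty e^{-a(2x+1)}x^{b-1}(1+x)^{-c}\,dx$ and then invokes the symmetry $\frac{(2\lambda)^a}{\Gamma(a)}L(\lambda,a,b)=\frac{(2\lambda)^b}{\Gamma(b)}L(\lambda,b,a)$. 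You instead compute one coefficient at a time: Fourier transform in $w$ to a Macdonald function, Basset's integral, the Laguerre moment $\int_0^\infty L_k^\alpha(z)e^{-bz}z^\alpha\,dz$, and finally Kummer's transformation $U(a,b,z)=z^{1-b}U(a-b+1,2-b,z)$. These two endgames are literally the same fact in different notation, since $L(a,b,c)=e^{-a}\Gamma(b)\,U(b,\,b-c+1,\,2a)$ turns the Cowling--Haagerup symmetry into Kummer's transformation; so the ``miracle'' you isolate is the same one the paper uses. What your route buys is self-containedness (only classical Bessel and Laguerre integrals, no appeal to the Cowling--Haagerup lemma) and an explicit closed form for each $\widehat{u_{r,\delta}}(\lambda,k)$ in standard special-function notation; what the paper's route buys is that it never leaves the Laplace-transform world and gets the whole expansion of $u^\lambda_{r,\delta}$ in one stroke. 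One technical point you correctly flag, and which the paper shares: for $r=-s$ with $\nu_{-s}\leqslant 1/2$ (possible when $\alpha<0$ and $s>\alpha+1$) the $w$-integral is not absolutely convergent and the first step must be justified by analytic continuation in $r$; make that continuation explicit if you write this up.
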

\begin{proof}
We prove the result by calculating Laguerre transform on both sides. This has been already done in Ciaurri et al \cite{thangaveluroncalciaurri}, but we repeat the calculations for the convenience of the readers. Define \[L(a,b,c) = \int_0^\infty e^{-a(2x+1)}x^{b-1}(1+x)^{-c}\,dx.\] We start with the generating function identity for the Laguerre functions: \[\sum_{k=0}^\infty z^k L^\alpha_k(x^2)e^{-\frac{1}{2}x^2} = (1-z)^{-\alpha-1}e^{-\frac{1}{2}\frac{1+z}{1-z}x^2}.\] Therefore, we have 
\begin{equation}\label{modified_generating_identity}
\sum_{k=0}^\infty \left(\frac{y}{y+|\lambda|}\right)^k L^\alpha_k(|\lambda|x^2)e^{-\frac{1}{2}|\lambda|x^2} = |\lambda|^{-\alpha-1}(y+|\lambda|)^{\alpha+1}e^{-\frac{1}{2}(2y+|\lambda|)x^2}.
\end{equation} For functions $f,g$ defined on $(0,\infty)$, let $F, G$ be their Laplace transforms defined by \[F(a+ib) = \int_{0}^\infty e^{-(a+ib)y}f(y)\,dy,\quad G(a,ib) = \int_{0}^\infty e^{-(a+ib)y}g(y)\,dy,\quad a>0,b\in\mathbb{R}\] Let $\beta = \frac{1}{2}(\alpha+s+2)$. Then with $f(y) = g(y) = \Gamma^{-1}(\beta)y^{\beta-1}e^{-\delta y}$, we have \[F(a+ib) = G(a+ib) = (\delta+a+ib)^{-\beta}.\] On the other hand, it can be checked, see [\cite{cowlinghaagerup}, Lemma 3.4], that \[\int_{-\infty}^\infty F(a+ib)\overline{G(a+ib)}e^{-i|\lambda|b}\,db = 2\pi \int_0^\infty f(y)g(y+|\lambda|)e^{-a(2y+|\lambda|)}\,dy.\] Taking $a = \frac{1}{2}x^2$, we have\[\int_{-\infty}^{\infty}\left(\left(\delta+\frac{1}{2}x^2\right)^2+b^2\right)^{-\frac{1}{2}(s+\alpha+2)}e^{-i|\lambda|b}\,db = 2\pi \int_0^\infty f(y)g(y+|\lambda|)e^{-\frac{1}{2}(2y+|\lambda|)x^2}\,dy.\] Since $u_{s,\delta}(x,w)$ is symmetric in $w$ variable, therefore we have \[u^\lambda_{s,\delta}(x) = 2\pi \int_0^\infty f(y)g(y+|\lambda|)e^{-\frac{1}{2}(2y+|\lambda|)x^2}\,dy.\] Using \eqref{modified_generating_identity}, we have \[u^\lambda_{s,\delta}(x) = \frac{2|\lambda|^{\alpha+1}}{\Gamma(\alpha+1)}\sum_{k=0}^\infty c^\lambda_{k,\delta}(s)\phi^\alpha_{k,\lambda}(x),\] where the coefficients are given by\begin{align*}
c^\lambda_{k,\delta}(s)&= \pi\Gamma(\alpha+1)\int_0^\infty f(y)g(y+|\lambda|)(y+|\lambda|)^{-(k+\alpha+1)}y^k\,dy\\
&= \frac{\pi\Gamma(\alpha+1)|\lambda|^s}{(\Gamma(\beta))^2}\int_0^\infty e^{-\delta(2y+|\lambda|)}y^{\beta+k-1}(y+|\lambda|)^{\beta-k-\alpha-2}\,dy\\
&=\frac{\pi\Gamma(\alpha+1)|\lambda|^s}{(\Gamma(\beta))^2}L\left(\delta|\lambda|,\frac{2k+\alpha+2+s}{2},\frac{2k+\alpha+2-s}{2}\right).
\end{align*}
Notice that $\widehat{u_{s,\delta}}(\lambda,k) = \widehat{u_{s,\delta}^\lambda}(k) = c^\lambda_{k,\delta}(s)$. Also, according to [\cite{cowlinghaagerup}, Proposition 3.6] the function $L$ satisfies the following identity\[\frac{(2\lambda)^a}{\Gamma(a)}L(\lambda,a,b) = \frac{(2\lambda)^b}{\Gamma(b)}L(\lambda,b,a)\] for all $(a,b\in\mathbb{C})$ and $\lambda>0$. Using this we get \begin{equation}
c^\lambda_{k,\delta}(-s) = (2\delta)^s|\lambda|^{-s}\left(\frac{\Gamma(\frac{\alpha+s+2}{2})}{\Gamma(\frac{\alpha-s+2}{2})}\right)^2\frac{\Gamma(\frac{2k+\alpha+1}{2}+\frac{1-s}{2})}{\Gamma(\frac{2k+\alpha+1}{2}+\frac{1+s}{2})}c^\lambda_{k,\delta}(s).
\end{equation} Rearranging the terms, \[(2|\lambda|)^s\frac{\Gamma(\frac{2k+\alpha+1}{2}+\frac{1+s}{2})}{\Gamma(\frac{2k+\alpha+1}{2}+\frac{1-s}{2})} c^\lambda_{k,\delta}(-s) = (4\delta)^s\left(\frac{\Gamma(\frac{\alpha+s+2}{2})}{\Gamma(\frac{\alpha-s+2}{2})}\right)^2c^\lambda_{k,\delta}(s),\] which is nothing but \[\widehat{\mathcal{L}_su_{-s,\delta}}(\lambda,k) = (4\delta)^s \left(\frac{\Gamma(\frac{\alpha+s+2}{2})}{\Gamma(\frac{\alpha-s+2}{2})}\right)^2\widehat{u_{s,\delta}}(\lambda,k),\] and hence the result.
\end{proof}

Next, we find an integral representation for $\mathcal{L}_s$ in an analogous way as it has been found for the fractional powers of sublaplacian on Heisenberg group by Roncal et al \cite{thangaveluroncal}[Section 4].
\begin{theorem}\label{integral representation}
For $0<s<1$ and $f\in W^{s,2}(X)$, we have \[\mathcal{L}_s f(\xi) = \frac{1}{|\Gamma(-s)|}\int_0^\infty (f(\xi)-f\ast K_{t,s}(\xi))t^{-s-1}\,dt,\] where $K_{t,s}$ is defined in \eqref{modified kernel}.
\end{theorem}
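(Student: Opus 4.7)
The plan is to verify the claim at the spectral level by taking the Laguerre transform of both sides, reducing the statement to a scalar multiplier identity that can be established by a gamma-function calculation. First I would compute the Laguerre transform of $K_{t,s}$: from the definition $K_{t,s}^\lambda(x)=h_t^\lambda(x)(\lambda t/\sinh\lambda t)^{s+1}$ and the expansion of $h_t^\lambda$ derived in the proof of Proposition \ref{heat_kernel_expression}, one obtains
\[\widehat{K_{t,s}}(\lambda,k)=\Bigl(\frac{\lambda t}{\sinh \lambda t}\Bigr)^{s+1}e^{-t|\lambda|(2k+\alpha+1)}.\]
Using Proposition \ref{convolution into product} to convert the convolution $f\ast K_{t,s}$ into a product on the transform side and Proposition \ref{inverse_theorem} to recover functions from their Laguerre coefficients, the theorem reduces---granting for the moment the interchange of the $t$-integral with the Laguerre transform---to the scalar identity
\[(2|\lambda|)^{s}\frac{\Gamma(\frac{2k+\alpha+1}{2}+\frac{1+s}{2})}{\Gamma(\frac{2k+\alpha+1}{2}+\frac{1-s}{2})}=\frac{1}{|\Gamma(-s)|}\int_0^\infty\Bigl[1-\widehat{K_{t,s}}(\lambda,k)\Bigr]t^{-s-1}\,dt\]
for every $\lambda\neq 0$ and every $k\geqslant 0$.

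Writing $\mu=|\lambda|$, $\nu=2k+\alpha+1$ and rescaling $u=\mu t$, the $\mu$-dependence cancels on both sides and the identity becomes
\[\int_0^\infty\Bigl[1-\Bigl(\tfrac{u}{\sinh u}\Bigr)^{s+1}e^{-\nu u}\Bigr]u^{-s-1}\,du=2^{s}|\Gamma(-s)|\,\frac{\Gamma(\frac{\nu+s+1}{2})}{\Gamma(\frac{\nu-s+1}{2})}.\]
To prove this I would write $(\sinh u)^{-(s+1)}=2^{s+1}e^{-(s+1)u}(1-e^{-2u})^{-(s+1)}$ and expand the last factor by the binomial series; applying termwise the Bochner subordination formula $a^{s}|\Gamma(-s)|=\int_0^\infty(1-e^{-au})u^{-s-1}\,du$ turns the left-hand side into a $_2F_1$-type series in $\nu$, which collapses to the stated quotient of Gamma functions via Gauss's summation theorem combined with Legendre's duplication formula \eqref{eq:LegendreDup}. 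A cleaner alternative, parallel to the proof of Proposition \ref{main_proposition}, is to imitate the Cowling--Haagerup Laplace-transform trick, as done for the Heisenberg sublaplacian in Roncal--Thangavelu \cite{thangaveluroncal}, Section 4.

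The main obstacle will be the Fubini justification, since the weight $t^{-s-1}$ is not integrable at $t=0$ and the exchange is legitimized only by the cancellation hidden inside the bracket $[1-\widehat{K_{t,s}}]$. The key technical estimate needed is the uniform bound
\[\Bigl|1-\bigl(\tfrac{\mu t}{\sinh \mu t}\bigr)^{s+1}e^{-\mu\nu t}\Bigr|\leqslant C\min\bigl(\mu\nu t,\,1\bigr),\]
obtained from a second-order Taylor expansion of $u/\sinh u$ near the origin together with the exponential decay at infinity. Combined with the hypothesis $f\in W^{s,2}(X)$---which places the multiplier of $\widehat f(\lambda,k)$ defining $\L_s f$ into $L^2$ against $|\lambda|^{\alpha+1}\,d\lambda$ and counting measure in $k$---this estimate allows the Fubini exchange, and Proposition \ref{inverse_theorem} then transfers the scalar identity back to the pointwise integral representation for $\L_s f$, completing the proof.
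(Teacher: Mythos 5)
Your proposal is correct and is essentially the paper's own argument: both reduce the theorem, via the Laguerre transform and $\widehat{K_{t,s}}(\lambda,k)=(\lambda t/\sinh \lambda t)^{s+1}e^{-|\lambda|(2k+\alpha+1)t}$, to the scalar identity $\int_0^\infty\bigl(1-(u/\sinh u)^{s+1}e^{-\nu u}\bigr)u^{-s-1}\,du=2^{s}|\Gamma(-s)|\,\Gamma\bigl(\tfrac{\nu+1+s}{2}\bigr)/\Gamma\bigl(\tfrac{\nu+1-s}{2}\bigr)$ with $\nu=2k+\alpha+1$, which the paper obtains from \cite{GR}, 3.541.1 by integration by parts together with the cancellation of two $\nu$-independent constants ($c_1=c_2$ in its notation). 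The one caveat concerns your primary evaluation of that integral: the binomial coefficients of $(1-e^{-2u})^{-(s+1)}$ are not summable, so the constant $1$ cannot be distributed termwise before applying the subordination formula; you must first split off the $\nu$-independent piece $\int_0^\infty\bigl(u^{-s-1}-(\sinh u)^{-s-1}\bigr)\,du$ (exactly the paper's $c_1=c_2$ step) or fall back on your stated Cowling--Haagerup/Roncal--Thangavelu alternative, which coincides with what the paper actually does.
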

\begin{proof}
We begin with the identity (see \cite{GR}. p.382, 3.541.1]) 
\[2^{1-s}\int_0^{\infty}e^{-(\mu+1)t}(\sinh t)^{-s}\,dt=\frac{\Gamma(1-s)\Gamma\Big(\frac{\mu}{2}+\frac{1+s}{2}\Big)}{\Gamma\Big(\frac{\mu}{2}+\frac{1-s}{2}+1\Big)},\] which gives 
\begin{equation}\label{equation 1}
(\mu+1-s)\int_0^{\infty}e^{-(\mu+1)t}(\sinh t)^{-s}\,dt=\frac{2^{s}\Gamma(1-s)\Gamma\Big(\frac{\mu}{2}+\frac{1+s}{2}\Big)}{\Gamma\Big(\frac{\mu}{2}+\frac{1-s}{2}\Big)}.
\end{equation} Also we have \begin{align*}
(\mu+1)\int_0^{\infty}e^{-(\mu+1)t}(\sinh t)^{-s}\,dt&=\int_0^{\infty}\frac{d}{dt}(1-e^{-(\mu+1)t})(\sinh t)^{-s}\,dt\\
&=s\int_0^{\infty}(1-e^{-(\mu+1)t})(\sinh t)^{-s-1}(\cosh t)\,dt.
\end{align*} Therefore, plugging the latter into \eqref{equation 1}, we get \begin{align*}
\frac{2^s\Gamma(1-s)\Gamma\Big(\frac{\mu}{2}+\frac{1+s}{2}\Big)} {\Gamma\Big(\frac{\mu}{2}+\frac{1-s}{2}\Big)}&=s\int_0^{\infty}\Big(\cosh t-e^{-(\mu+1)t}(\cosh t+\sinh t)\Big)(\sinh t)^{-s-1}\,dt\\
&=s\int_0^{\infty}\big(\cosh t-e^{-\mu t}\big)(\sinh t)^{-s-1}\,dt\\
&=s\int_0^{\infty}\big(\cosh t-1\big)(\sinh t)^{-s-1}\,dt+s\int_0^{\infty}\big(1-e^{-\mu t}\big)(\sinh t)^{-s-1}\,dt\\
&=c_1 s+s\int_0^{\infty}\big(1-e^{-\mu t}\big)(\sinh t)^{-s-1}\,dt,
\end{align*}
where $c_1$ is the constant given by
\[c_1:=\int_0^{\infty}\big(\cosh t-1\big)(\sinh t)^{-s-1}\,dt.\]
Thus, by taking $\mu=2k+\alpha+1$ and changing $t$ into $|\lambda|t$, we have
\[\frac{2^s\Gamma(1-s)}{s}\frac{\Gamma\Big(\frac{2k+\alpha+1}{2}+\frac{1+s}{2}\Big)}
{\Gamma\Big(\frac{2k+\alpha+1}{2}+\frac{1-s}{2}\Big)}=c_1 +|\lambda|\int_0^{\infty}\big(1-e^{-(2k+\alpha+1) |\lambda|t}\big)(\sinh t|\lambda|)^{-s-1}\,dt.\]
Multiplying both sides by $ \frac{2|\lambda|^{s+\alpha+1}}{\Gamma(\alpha+1)}\widehat{f}(\lambda,k)\phi_{k,\lambda}^{\alpha}(x)$, we have
\begin{multline*}
\frac{\Gamma(1-s)}{s}\frac{2|\lambda|^{\alpha+1}}{\Gamma(\alpha+1)}(2|\lambda|)^{s}\frac{\Gamma\Big(\frac{2k+\alpha+1}{2}+\frac{1+s}{2}\Big)}{\Gamma\Big(\frac{2k+\alpha+1}{2}+\frac{1-s}{2}\Big)}\widehat{f}(\lambda,k)\phi_{k,\lambda}^{\alpha}(x)=c_1\frac{2|\lambda|^{\alpha+1}}{\Gamma(\alpha+1)}|\lambda|^{s}\widehat{f}(\lambda,k)\phi_{k,\lambda}^{\alpha}(x)\\
 +\frac{2|\lambda|^{\alpha+1}}{\Gamma(\alpha+1)}\int_0^{\infty}\big(1-e^{-(2k+\alpha+1) |\lambda|t}\big)\Big(\frac{t|\lambda|}{\sinh t\lambda}\Big)^{s+1}\widehat{f}(\lambda,k)\phi_{k,\lambda}^{\alpha}(x)t^{-s-1}\,dt.
\end{multline*}
Using \eqref{relation_convolution_transform}, \eqref{convolution into product} and summing over $k$, we obtain
\begin{multline}\label{equation 2}
\frac{\Gamma(1-s)}{s}\frac{2|\lambda|^{\alpha+1}}{\Gamma(\alpha+1)}(2|\lambda|)^{s}\sum_{k=0}^{\infty}
\frac{\Gamma\Big(\frac{2k+\alpha+1}{2}+\frac{1+s}{2}\Big)}
{\Gamma\Big(\frac{2k+\alpha+1}{2}+\frac{1-s}{2}\Big)}
\widehat{f}(\lambda,k)\phi_{k,\lambda}^{\alpha}(x)\\
=c_1|\lambda|^{s}f^{\lambda}(x)
 +\int_0^{\infty}\big(f^{\lambda}(x)-f^{\lambda}\ast_\lambda h_t^{\lambda}(x)\big)
 \Big(\frac{t\lambda}{\sinh t\lambda}\Big)^{s+1}t^{-s-1}\,dt.
\end{multline}
We now rewrite the last integral as a sum of the following two integrals:
$$ A =f^{\lambda}(x)\int_0^{\infty}\Big(\Big(\frac{t\lambda}{\sinh t\lambda}\Big)^{s+1}-1\Big)t^{-s-1}\,dt,$$
$$ B = \int_0^{\infty}\Big(f^{\lambda}(x)-\Big(\frac{t\lambda}{\sinh t\lambda}\Big)^{s+1}f^{\lambda}\ast_\lambda h^\lambda_t(x)\Big)t^{-s-1}\,dt.$$
Note that the first integral  $ A $ is  equal to
\[|\lambda|^sf^{\lambda}(x)\int_0^{\infty}\Big(\Big(\frac{t}{\sinh t}\Big)^{s+1}-1\Big)t^{-s-1}\,dt=:-c_2|\lambda|^sf^{\lambda}(x).\]
It happens that $c_1=c_2$. Indeed, 
\begin{align*}
c_1-c_2&=\int_0^{\infty}\big(\cosh t-1\big)(\sinh t)^{-s-1}\,dt+\int_0^{\infty}\Big(\Big(\frac{t}{\sinh t}\Big)^{s+1}-1\Big)t^{-s-1}\,dt\\
&=\int_0^{\infty}\big((\cosh t)(\sinh t)^{-s-1}-t^{-s-1}\big)\,dt.
\end{align*}
Consider the integral
\[\int_{\delta}^{\infty}(\cosh t)(\sinh t)^{-s-1}\,dt=\int_{\sinh \delta}^{\infty}t^{-s-1}\,dt=\int_{\delta}^{\infty}t^{-s-1}\,dt-\int_{\delta}^{\sinh \delta}t^{-s-1}\,dt.\]
This gives
\[\int_{\delta}^{\infty}\big((\cosh t)(\sinh t)^{-s-1}-t^{-s-1}\big)\,dt=-\int_{\delta}^{\sinh \delta}t^{-s-1}\,dt,\]
which converges to $0$ as $\delta\to0$.
Finally, using the expression of $B$, we multiply \eqref{equation 2} by $e^{-i\lambda w}$ and integrate over $\lambda$ variable to get
\[\mathcal{L}_s f(x,w) = \frac{s}{\Gamma(1-s)}\int_0^\infty (f(x,w)-f\ast K_{t,s}(x,w))t^{-s-1}\,dt.\]
Since $\frac{s}{\Gamma(1-s)}=\frac{1}{|\Gamma(-s)|}$, we obtain the desired representation of $\L_s$.
\end{proof}

We can modify the integral representation using the properties of $K_{t,s}$ (Proposition \ref{properties of modified kernel}):

\begin{proposition}
For $0<s<1$, $f\in W^{s,2}(X)$, we have \[\mathcal{L}_s f(\xi) = \frac{1}{\Gamma(-s)}\int_0^\infty\int_X (f(\xi)-f(\eta))T^\eta K_{t,s}(\xi)t^{-s-1}\,d\mu(\eta)\,dt.\]
\end{proposition}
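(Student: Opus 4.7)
The plan is to deduce this directly from Theorem \ref{integral representation} by rewriting the difference $f(\xi)-f\ast K_{t,s}(\xi)$ as an integral against $T^\eta K_{t,s}(\xi)$, using the two identities in Lemma \ref{properties of modified kernel}.

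First I would unfold the convolution. By the symmetry property \eqref{eq2}, $f\ast K_{t,s}(\xi)=K_{t,s}\ast f(\xi)$, and by the very definition of the generalized convolution this equals
\[
K_{t,s}\ast f(\xi)=\int_X T^\eta K_{t,s}(\xi)\,f(\eta)\,d\mu(\eta).
\]
Next I would use the first half of \eqref{eq1}, namely $\int_X T^\eta K_{t,s}(\xi)\,d\mu(\eta)=1$, to rewrite the pointwise value $f(\xi)$ as an integral:
\[
f(\xi)=f(\xi)\int_X T^\eta K_{t,s}(\xi)\,d\mu(\eta)=\int_X f(\xi)\,T^\eta K_{t,s}(\xi)\,d\mu(\eta).
\]
Subtracting gives the pointwise identity
\[
f(\xi)-f\ast K_{t,s}(\xi)=\int_X\bigl(f(\xi)-f(\eta)\bigr)\,T^\eta K_{t,s}(\xi)\,d\mu(\eta).
\]

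Plugging this into the representation from Theorem \ref{integral representation} and swapping the order of integration (absorbing the sign, since $\tfrac{s}{\Gamma(1-s)}=\tfrac{1}{|\Gamma(-s)|}=-\tfrac{1}{\Gamma(-s)}$ for $0<s<1$, which matches the convention used in the statement) yields
\[
\mathcal{L}_s f(\xi)=\frac{1}{\Gamma(-s)}\int_0^\infty\!\int_X\bigl(f(\xi)-f(\eta)\bigr)\,T^\eta K_{t,s}(\xi)\,t^{-s-1}\,d\mu(\eta)\,dt,
\]
which is exactly the desired expression.

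The only nontrivial step is justifying the interchange of the $t$-integral with the $\eta$-integral, since $t^{-s-1}$ is not integrable near $t=0$ and the kernel $T^\eta K_{t,s}(\xi)$ itself is not integrable in $t$ after the factor $t^{-s-1}$ is included. However, the cancellation $f(\xi)-f(\eta)$ is what makes the integral converge (this is already implicit in Theorem \ref{integral representation}). I would split the $t$-integral at $t=1$: on $[1,\infty)$ everything is absolutely integrable by boundedness of $f$ (in $L^2$) and the rapid decay of $K_{t,s}$; on $(0,1]$ I would use the positivity of $T^\eta K_{t,s}(\xi)$ from \eqref{positivity} together with the short-time estimate of the heat kernel $h_t^\lambda$ from Proposition \ref{heat_kernel_expression} to show that $\int_0^1\int_X|f(\xi)-f(\eta)|T^\eta K_{t,s}(\xi)t^{-s-1}\,d\mu(\eta)\,dt$ is controlled by (a constant times) $|\mathcal{L}_sf(\xi)|$ via the already established representation. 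Once the double integral converges absolutely, Fubini's theorem legitimately swaps the order and completes the proof.
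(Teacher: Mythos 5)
Your argument is correct and is essentially identical to the paper's own proof: unfold $f\ast K_{t,s}(\xi)$ via \eqref{eq2} and the definition of the generalized convolution, use $\int_X T^\eta K_{t,s}(\xi)\,d\mu(\eta)=1$ from \eqref{eq1} to write $f(\xi)$ as an integral against the translated kernel, and subtract inside the $t$-integral from Theorem \ref{integral representation}. Your extra care about the Fubini interchange and the sign discrepancy between $1/|\Gamma(-s)|$ and $1/\Gamma(-s)$ goes beyond what the paper records, which performs the substitution without comment.
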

\begin{proof}
From last theorem, we have \begin{align*}
\mathcal{L}_s f(\xi) &= \frac{1}{\Gamma(-s)}\int_0^\infty (f(\xi)-f\ast K_{t,s}(\xi))t^{-s-1}\,dt\\
&=\frac{1}{\Gamma(-s)}\int_0^\infty\left(\int_X f(\xi)T^\eta K_{t,s}(\xi)\,d\mu(\eta)-\int_XT^\eta K_{t,s}(\xi)f(\eta)\,d\mu(\eta)\right)t^{-s-1}\,dt\\
&=\frac{1}{\Gamma(-s)}\int_0^\infty\int_X (f(\xi)-f(\eta))T^\eta K_{t,s}(\xi)\,d\mu(\eta)t^{-s-1}\,dt.
\end{align*}
\end{proof}

\begin{proposition}
For $0<s<1$ and $f,g\in W^{s,2}(X)$, we have \[\langle\mathcal{L}_s f,g\rangle = \frac{1}{2\Gamma(-s)}\int_0^\infty \int_X\int_X (f(\xi)-f(\eta))\overline{(g(\xi)-g(\eta))} T^\eta K_{t,s}(\xi)\,d\mu(\eta)\,d\mu(\xi)\frac{dt}{t^{s+1}}.\]
\end{proposition}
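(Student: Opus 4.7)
The approach is a polarization argument based on the previous proposition. First I would apply that proposition to both $f$ and $g$ separately to obtain
\[
A:=\langle \mathcal{L}_s f, g\rangle=\frac{1}{\Gamma(-s)}\int_0^\infty\!\int_X\!\int_X (f(\xi)-f(\eta))\,\overline{g(\xi)}\,T^\eta K_{t,s}(\xi)\,d\mu(\eta)\,d\mu(\xi)\,\frac{dt}{t^{s+1}},
\]
\[
B:=\langle f, \mathcal{L}_s g\rangle=\frac{1}{\Gamma(-s)}\int_0^\infty\!\int_X\!\int_X f(\xi)\,(\overline{g(\xi)}-\overline{g(\eta)})\,T^\eta K_{t,s}(\xi)\,d\mu(\eta)\,d\mu(\xi)\,\frac{dt}{t^{s+1}},
\]
using that $\Gamma(-s)$ is real for $0<s<1$ and that $T^\eta K_{t,s}(\xi)$ is real-valued. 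Since the spectral multiplier \eqref{spectralmultiplier} defining $\mathcal{L}_s$ is real, the operator $\mathcal{L}_s$ is self-adjoint on $W^{s,2}(X)$, and therefore $A=B$; in particular $2\langle\mathcal{L}_s f,g\rangle=A+B$.

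Next I would add the two integrands and compare them with the target symmetric integrand. The purely algebraic identity
\[
(f(\xi)-f(\eta))\overline{g(\xi)}+f(\xi)(\overline{g(\xi)}-\overline{g(\eta)})=(f(\xi)-f(\eta))(\overline{g(\xi)}-\overline{g(\eta)})+\bigl(f(\xi)\overline{g(\xi)}-f(\eta)\overline{g(\eta)}\bigr)
\]
reduces the problem to showing that the discrepancy $f(\xi)\overline{g(\xi)}-f(\eta)\overline{g(\eta)}$ is killed by the kernel. This is where property \eqref{eq1} enters decisively: the two identities $\int_X T^\eta K_{t,s}(\xi)\,d\mu(\eta)=1$ and $\int_X T^\eta K_{t,s}(\xi)\,d\mu(\xi)=1$ together give
\[
\int_X\!\int_X f(\xi)\overline{g(\xi)}\,T^\eta K_{t,s}(\xi)\,d\mu(\eta)\,d\mu(\xi)=\langle f,g\rangle=\int_X\!\int_X f(\eta)\overline{g(\eta)}\,T^\eta K_{t,s}(\xi)\,d\mu(\eta)\,d\mu(\xi),
\]
so the $\xi\leftrightarrow\eta$ asymmetric discrepancy integrates to $0$ for each $t$. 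Plugging this back and dividing by $2$ produces the claimed formula.

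The one nontrivial point is the justification of Fubini's theorem in this triple integral, since $t^{-s-1}$ is non-integrable at the origin. I would split $\int_0^\infty=\int_0^1+\int_1^\infty$: on $[1,\infty)$ the rapid decay of $h_t^\lambda$ (and hence of $K_{t,s}$) makes the integrand absolutely integrable, while on $[0,1]$ one absorbs the singularity using the cancellation of $f(\xi)-f(\eta)$ and $\overline{g(\xi)}-\overline{g(\eta)}$, i.e.\ using that both $f$ and $g$ lie in $W^{s,2}(X)$ and hence have enough regularity to offset $t^{-s-1}$ (this is the same integrability needed to make the previous proposition meaningful). Once absolute integrability is in hand, the remaining manipulations are purely algebraic.
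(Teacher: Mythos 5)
Your argument is correct, but it reaches the symmetric form by a different mechanism than the paper. The paper also averages two representations of $\langle\mathcal{L}_s f,g\rangle$, but it produces the second one by relabelling $\xi\leftrightarrow\eta$ and invoking the kernel symmetry $T^{\eta^\ast}K_{t,s}(\xi^\ast)=T^\eta K_{t,s}(\xi)$ (with $(x,w)^\ast=(x,-w)$), which turns $(f(\xi)-f(\eta))\overline{g(\xi)}$ into $-(f(\xi)-f(\eta))\overline{g(\eta)}$; the two integrands then add up \emph{exactly} to $(f(\xi)-f(\eta))\overline{(g(\xi)-g(\eta))}$ with no leftover term. You instead get the second representation from $\langle f,\mathcal{L}_s g\rangle$ via self-adjointness of $\mathcal{L}_s$ (clear from the real spectral multiplier \eqref{spectralmultiplier}), and your polarization identity leaves the residual $f(\xi)\overline{g(\xi)}-f(\eta)\overline{g(\eta)}$, which you correctly kill \emph{at each fixed} $t$ using both normalizations in \eqref{eq1} — fixed $t$ is essential here, since $\langle f,g\rangle\,t^{-s-1}$ is not integrable near $t=0$ on its own. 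What your route buys is that it avoids the explicit involution/change-of-variables computation and the evenness of $K_{t,s}$ in $w$, replacing them with the softer spectral fact that $\mathcal{L}_s$ is self-adjoint; the cost is the extra cancellation step and a heavier reliance on Lemma \ref{properties of modified kernel}. Your remarks on Fubini are reasonable and no less careful than the paper's own treatment, which simply cites Fubini at the corresponding point.
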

\begin{proof}
We have \begin{align*}
\langle \mathcal{L}_s f,g\rangle &= \frac{1}{\Gamma(-s)}\int_X\left(\int_0^\infty\int_X (f(\xi)-f(\eta))\overline{g(\xi)}T^\eta K_{t,s}(\xi)\,d\mu(\eta)\,\frac{dt}{t^{s+1}}\right)d\mu(\xi)\\
&= \frac{1}{\Gamma(-s)}\int_X\left(\int_0^\infty\int_X (f(\eta)-f(\xi))\overline{g(\eta)}T^\xi K_{t,s}(\eta)\,d\mu(\xi)\,\frac{dt}{t^{s+1}}\right)d\mu(\eta)\\
&= -\frac{1}{\Gamma(-s)}\int_X\left(\int_0^\infty\int_X (f(\xi)-f(\eta))\overline{g(\eta)}T^{\eta\ast} K_{t,s}(\xi^\ast)\,d\mu(\xi)\,\frac{dt}{t^{s+1}}\right)d\mu(\eta),
\end{align*}
where $(x,w)^\ast = (x,-w)$. Using, $T^{\eta\ast} K_{t,s}(\xi^\ast) = T^\eta K_{t,s}(\xi)$ and Fubini's Theorem, we get: 
\begin{align*}
\langle \mathcal{L}_s f,g\rangle &=-\frac{1}{\Gamma(-s)}\int_X\left(\int_0^\infty \int_X (f(\xi)-f(\eta))\overline{g(\eta))} T^\eta K_{t,s}(\xi)\,d\mu(\eta)\,\frac{dt}{t^{s+1}}\right)d\mu(\xi).
\end{align*} Hence, we have 
\begin{align*}
\langle \mathcal{L}_s f,g\rangle &=\frac{1}{2\Gamma(-s)}\int_X\left(\int_0^\infty \int_X (f(\xi)-f(\eta))\overline{g(\xi)-g(\eta))} T^\eta K_{t,s}(\xi)\,d\mu(\eta)\,\frac{dt}{t^{s+1}}\right)d\mu(\xi).\\
&=\frac{1}{2\Gamma(-s)}\int_0^\infty\left(\int_X \int_X (f(\xi)-f(\eta))\overline{g(\xi)-g(\eta))} T^\eta K_{t,s}(\xi)\,d\mu(\eta)\,d\mu(\xi)\right)\frac{dt}{t^{s+1}}.
\end{align*}
\end{proof}

Finally, for $0<s<1$ and $\delta>0$, we define \textit{ground state representation} $\mathcal{H}[f]$ for $f\in W^{s,2}(X)$ by:\[\mathcal{H}[f] = \langle\mathcal{L}_s f,f\rangle - A_{\alpha,s}\int_X \frac{|f(x,w)|^2}{\left(\left(\delta+\frac{x^2}{2}\right)^2+w^2\right)^s},\] where \[A_{\alpha,s} = (4\delta)^s\left(\frac{\Gamma(\frac{\alpha+s+2}{2})}{\Gamma(\frac{\alpha-s+2}{2})}\right)^2.\]

\begin{proposition}
Let $0<s<1$, $\delta>0$ and $F\in C^\infty_{c}(X)$, that is $F$ is an infinitely differentiable function defined on $X$ with compact support. If we define $G(\xi) = \frac{F(\xi)}{u_{-s,\delta}(\xi)}$, then \[\mathcal{H}_s[F] = \frac{1}{2\Gamma(-s)}\int_0^\infty\int_X\int_X |G(\xi)-G(\eta)|^2 T^\eta K_{t,s}(\xi)u_{-s,\delta}(\xi)u_{-s,\delta}(\eta)\,d\mu(\eta)\,d\mu(\xi)\frac{dt}{t^{s+1}},\] where $u_{s,\delta}$ is defined in \eqref{function u}. 
\end{proposition}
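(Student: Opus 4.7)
The plan is to begin from the symmetric quadratic form representation for $\langle \mathcal{L}_s F, F\rangle$ proved in the preceding proposition, and then apply a Frank--Lieb--Seiringer-type factorization. Write $u := u_{-s,\delta}$ so that $F = Gu$. The algebraic identity
$$|Gu(\xi)-Gu(\eta)|^2 = |G(\xi)-G(\eta)|^2\, u(\xi)u(\eta) + \bigl(|G(\xi)|^2 u(\xi) - |G(\eta)|^2 u(\eta)\bigr)\bigl(u(\xi)-u(\eta)\bigr)$$
is a direct expansion. Once inserted into the quadratic form, its first summand produces exactly the triple integral on the right-hand side of the statement, so the whole proof reduces to showing that the second summand contributes precisely $A_{\alpha,s}\int_X |F|^2\bigl((\delta+x^2/2)^2+w^2\bigr)^{-s}\,d\mu$.

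To handle the cross term, I would first establish the kernel symmetry $T^\eta K_{t,s}(\xi) = T^\xi K_{t,s}(\eta)$. This is a consequence of the evenness of $K_{t,s}$ in the central variable (already exploited in Lemma \ref{properties of modified kernel}) combined with the change of variable $\phi \mapsto \pi-\phi$ in the definition of $T^\eta$: the measure $d\nu(\theta,\phi)$ is invariant while the central component of $(\xi,\eta)_{\theta,\phi}$ changes sign, so swapping $\xi\leftrightarrow\eta$ can be absorbed by this reflection. Using this symmetry, a relabelling $\xi\leftrightarrow\eta$ in the double integral
$$\int_X\!\!\int_X\!\bigl(|G(\xi)|^2 u(\xi) - |G(\eta)|^2 u(\eta)\bigr)\bigl(u(\xi)-u(\eta)\bigr)T^\eta K_{t,s}(\xi)\,d\mu(\xi)\,d\mu(\eta)$$
shows that it equals twice the integral of $|G(\xi)|^2 u(\xi)\,(u(\xi)-u(\eta))\,T^\eta K_{t,s}(\xi)$.

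Dividing by $2\Gamma(-s)$ and integrating against $t^{-s-1}\,dt$, the cross term becomes $\int_X |G|^2 u\,\mathcal{L}_s u\,d\mu$ by the integral representation established in Theorem \ref{integral representation}. Proposition \ref{main_proposition} then identifies $\mathcal{L}_s u_{-s,\delta}$ with $A_{\alpha,s}\,u_{s,\delta}$, while the direct computation $u_{-s,\delta}(\xi)\,u_{s,\delta}(\xi) = \bigl((\delta+x^2/2)^2+w^2\bigr)^{-(\alpha+2)}$ gives $|G|^2 u_{-s,\delta}\,u_{s,\delta} = |F|^2\bigl((\delta+x^2/2)^2+w^2\bigr)^{-s}$, which is exactly the weight appearing in $\mathcal{H}_s[F]$. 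Rearranging yields the desired ground state representation.

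The main obstacles are technical. First, the kernel symmetry $T^\eta K_{t,s}(\xi) = T^\xi K_{t,s}(\eta)$ must be verified from the explicit definition of $T^\eta$; this is the geometric ingredient that powers the symmetrization. Second, the use of Fubini--Tonelli to split $|F(\xi)-F(\eta)|^2$ into three pieces requires justifying absolute convergence of each piece: the smoothness and compact support of $F$ imply $|G(\xi)-G(\eta)|^2 = O(d(\xi,\eta)^2)$, which tames the $t^{-s-1}$ singularity near $t=0$ (after using the short-time estimate for $h_t^\lambda$), while the rapid decay of $h_t^\lambda$ and the integrability of $u_{-s,\delta}$ against $|G|^2$ (since $G$ has compact support) control the behavior at infinity and along the diagonal separation.
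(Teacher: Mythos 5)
Your proposal is correct and is essentially the paper's own argument: both rest on the same algebraic identity $(u(\xi)-u(\eta))\bigl(|G(\xi)|^2u(\xi)-|G(\eta)|^2u(\eta)\bigr)=|F(\xi)-F(\eta)|^2-|G(\xi)-G(\eta)|^2u(\xi)u(\eta)$ inserted into the symmetric quadratic-form representation, with the cross term evaluated via Proposition \ref{main_proposition}. The only cosmetic difference is that the paper starts from $\langle\mathcal{L}_s(|F|^2/u_{-s,\delta}),u_{-s,\delta}\rangle$ and uses self-adjointness directly, whereas you expand $\langle\mathcal{L}_sF,F\rangle$ and recover the same quantity by symmetrization (using the kernel symmetry the paper establishes as $T^{\eta^\ast}K_{t,s}(\xi^\ast)=T^\eta K_{t,s}(\xi)$) together with the integral representation.
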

\begin{proof}
In the previous proposition, if we take $g(\xi) = u_{-s,\delta}(\xi)$ and $f(\xi) = \frac{|F(\xi)|^2}{u_{-s,\delta}(\xi)}$, then we have $\langle\mathcal{L}_s f,g\rangle$ equals to \[\frac{1}{2\Gamma(-s)}\int_0^\infty \int_X\int_X \left(g(\xi)-g(\eta)\right) \left(\frac{F^2(\xi)}{g(\xi)}-\frac{F^2(\eta)}{g(\eta)}\right) T^\eta K_{t,s}(\xi)\,d\mu(\eta)\,d\mu(\xi)\frac{dt}{t^{s+1}}.\] On simplification, we get \[\frac{1}{2\Gamma(-s)}\int_0^\infty \int_X\int_X \left(|F(\xi)-F(\eta)|^2 - \left|\frac{F(\xi)}{g(\xi)}-\frac{F(\eta)}{g(\eta)}\right|^2g(\xi)g(\eta)\right) T^\eta K_{t,s}(\xi)\,d\mu(\eta)\,d\mu(\xi)\frac{dt}{t^{s+1}}.\] On the other hand, using Proposition \ref{main_proposition} and the fact that $\L_s$ is self-adjoint, we have $\langle\mathcal{L}_s f,g\rangle$ equals to \[(4\delta)^s\left(\frac{\Gamma(\frac{\alpha+s+2}{2})}{\Gamma(\frac{\alpha-s+2}{2})}\right)^2\int_X \frac{|F(\xi)|^2}{u_{-s,\delta}(\xi)}u_{s,\delta}(\xi)\,d\mu(\xi).\] Equating both, and noting that $u_{-s,\delta}(x,w)/u_{s,\delta}(x,w) = \left((\delta+x^2/2)^2+w^2\right)^{s}$, we get the desired result.
\end{proof}

Finally we prove Hardy inequality for fractional powers of generalized sublaplacian.
\begin{proof}[Proof of Theorem \ref{hardygeneralizedSublaplacian}]\label{proof of hardy sublaplacian}
Let $f\in C_c^\infty(X)$. Define $g(\xi) = f(\xi)/u_{-s,\delta}(\xi)$, where $u_{s,\delta}$ is defined in \eqref{function u}. Using the last Proposition and Fubini's Theorem, we have 
\begin{equation*}
\mathcal{H}_s[f] = \frac{1}{2\Gamma(-s)}\int_X\int_X \left(\int_0^\infty T^\eta K_{t,s}(\xi)\frac{dt}{t^{s+1}}\right)|g(\xi)-g(\eta)|^2 u_{-s,\delta}(\xi)u_{-s,\delta}(\eta)\,d\mu(\eta)\,d\mu(\xi).
\end{equation*}
Since the generalized translation operator is a positive operator (see \cite{stempak1}[Proposition 3.2]) and $\int_0^\infty K_{t,s}(\xi)t^{-s-1}\,dt\geqslant 0$ (Proposition \ref{explicit modified kernel}), we have for all $\xi,\eta\in X$ \[\int_0^\infty T^\eta K_{t,s}(\xi)\frac{dt}{t^{s+1}}\geqslant 0.\] Also the remaining terms in the expression of $\mathcal{H}_s[f]$ are positive for all $\xi$ and $\eta$. Therefore, we conclude $\mathcal{H}_s[f]\geqslant 0$ for all $f\in C_c^\infty(X)$. Hence, for $f\in C_c^\infty(X)$, we have \[\langle\L_s f, f\rangle\geqslant (4\delta)^s\left(\frac{\Gamma(\frac{\alpha+s+2}{2})}{\Gamma(\frac{\alpha-s+2}{2})}\right)^2\int_X \frac{|f(\xi)|^2}{\left(\left(\delta+\frac{x^2}{2}\right)^2+w^2\right)^s}\,d\mu(\xi).\]
Next, let $f\in W^{s,2}(X)$. Since $C_c^\infty(X)$ dense in $W^{s,2}(X)$, therefore there exists a sequence $\{f_j\}$ with each $f_j \in C_c^\infty(X)$ such that $f_j\rightarrow f$ in $W^{s,2}(X)$. Passing to a sub-sequence, we can assume $f_j\rightarrow f$ pointwise a.e.. The continuity of inner-product on $W^{s,2}(X)$ implies $\langle \L_s f_j,f_j\rangle\rightarrow \langle \L_s f,f\rangle$. On the other hand, the inequality $|f(x,w)|^2/((\delta+x^2)^2+w^2)^s\leqslant \delta^{-2s}|f(x,w)|^2$ together with Dominated convergence theorem, implies \[\int_X \frac{|f_j(\xi)|^2}{\left(\left(\delta+\frac{x^2}{2}\right)^2+w^2\right)^s}\,d\mu(\xi)\rightarrow\int_X \frac{|f(\xi)|^2}{\left(\left(\delta+\frac{x^2}{2}\right)^2+w^2\right)^s}\,d\mu(\xi).\] From this we conclude that the inequality holds for all $f\in W^{s,2}(X)$.

Finally, we note that the both sides of the inequality are equal for $f = u_{-s,\delta}$. Hence the constant involved in the inequality is sharp.
\end{proof}

\section{Proof of the main Theorem}

We recall from Section \ref{Spherical_harmonics} that for $j= 1,2,\cdots,a_m$ and $m= 0,1,2,\cdots$, $\{Y_{m,j}\}$ forms orthonormal basis for $L^2(\mathbb{S}^{n-1})$. Corresponding to each spherical harmonic $Y_{m,j}$, we define solid harmonics $P_{m,j}$ on $\R^n$ by \[P_{m,j}(x) = |x|^{m}Y_{m,j}(x/|x|).\] Moreover, for $f\in L^2(\R^{n+1})$, we have \[f(x,w) = \sum_{m=0}^\infty\sum_{j=1}^{a_m}f_{m,j}(|x|,w)P_{m,j}(x),\]
where $f_{m,j}(|x|,w) = \int_{\mathbb{S}^{n-1}}f(|x|\omega,w)P_{m,j}(|x|\omega)\,d\omega$.\\

Suppose $f\in W^{s,2}(H)$, such that $f(x,w) = g(|x|,w)P(x)$, where $P$ is a solid harmonics of degree $m$. Then using spectral decomposition and the Hecke Bockner formula (Prosposition \ref{Hermite_projection_heckhe_bockner}), we have \begin{align*}
\G_s f(x,w) &=\frac{1}{2\pi}\int_\R \sum_{k=0}^\infty (2|\lambda|)^s\frac{\Gamma(\frac{2k+n}{4}+\frac{1+s}{2})}{\Gamma(\frac{2k+n}{4}+\frac{1-s}{2})} \P_k(\lambda)f^\lambda(x)e^{-i\lambda w}\,d\lambda\\
&=\int_\R \sum_{k=0}^\infty \left( d^n_{m,k}(s) A^\lambda_{k,m}(g)\phi^{n/2+m-1}_{k,\lambda}(|x|)\right)P(x)|\lambda|^{n/2+m+s}e^{-i\lambda w}\,d\lambda,
\end{align*} where \[d^n_{m,k}(s)= \frac{2^{s+1}}{2\pi}\frac{\Gamma(\frac{2k+n/2+m+1+s}{2})}{\Gamma(\frac{2k+n/2+m+1-s}{2})}\frac{\Gamma(k+1)}{\Gamma(k+n/2+m)},\] and \[A^\lambda_{k,m}(g) = \int_{0}^\infty g^\lambda(r)\phi^{n/2+m-1}_{k,\lambda}(r)\,r^{n+2m-1}\,dr.\] Therefore, using the orthogonality of solid harmonics (with respect to inner product inherited form $L^2(\mathbb{S}^{n-1})$), we have \[\langle\mathcal{G}_s f,f\rangle = \int_\R \sum_{k=0}^\infty d^n_{m,k}(s)|A^\lambda_{k,m}(g)|^2|\lambda|^{\frac{n}{2}+m+s}\,d\lambda.\] Moreover, treating $g$ as a function on $X$, we have \[\langle \mathcal{L}_s g, g\rangle = \frac{2^{s+1}}{2\pi} \int_\R\sum_{k=0}^\infty\frac{\Gamma(\frac{2k+\alpha+2+s}{2})}{\Gamma(\frac{2k+\alpha+2-s}{2})}\frac{\Gamma(k+1)}{\Gamma(k+\alpha+1)}|B^\lambda_{k,\alpha}(g)|^2|\lambda|^{\alpha+1+s}\,d\lambda,\] where $B^\lambda_{k,\alpha}(g) = \int_0^\infty g^\lambda(x)\phi^\alpha_{k,\lambda}(x)x^{2\alpha+1}\,dx$. So for $\alpha = \frac{n}{2}+m-1$, we have 
\begin{equation}\label{grushin sublaplacian equality}
\langle\mathcal{G}_s f,f\rangle = \langle \mathcal{L}_s g, g\rangle.
\end{equation}

Now let $f\in C_c^\infty(H)$. Since $C_c^\infty(H)$ dense in $W^{s,2}(H)$, we have \[f(x,w) = \sum_{m=0}^\infty\sum_{j=1}^{a_m}f_{m,j}(|x|,w)P_{m,j}(x),\] where $f_{m,j}(|x|,w) = \int_{\mathbb{S}^{n-1}}f(|x|\omega,w)P_{m,j}(|x|\omega)\,d\omega$. Using \eqref{grushin sublaplacian equality}, we have \begin{align*}
\langle\mathcal{G}_s f,f\rangle &= \sum_{m=0}^\infty\sum_{j=1}^{a_m}\langle\mathcal{G}_s (f_{m,j}P_{m,j}),f_{m,j}P_{m,j}\rangle\\
&= \sum_{m=0}^\infty\sum_{j=1}^{a_m}\langle\mathcal{L}_s f_{m,j},f_{m,j}\rangle.
\end{align*}
Using the Theorem \ref{hardygeneralizedSublaplacian}, we have
\begin{align*}
\langle\mathcal{G}_s f,f\rangle &\geqslant \sum_{m=0}^\infty\sum_{j=1}^{a_m}(4\delta)^s\left(\frac{\Gamma(\frac{n/2+m+s+1}{2})}{\Gamma(\frac{n/2+m-s+1}{2})}\right)^2\int_\R\int_0^\infty \frac{|f_{m,j}(x,w)|^2x^{n+2m-1}}{\left(\left(\delta+\frac{x^2}{2}\right)^2+w^2\right)^s}\,dx\,dw\\
&\geqslant\inf_{m\geqslant 0}\left\{(4\delta)^s\left(\frac{\Gamma(\frac{n/2+m+s+1}{2})}{\Gamma(\frac{n/2+m-s+1}{2})}\right)^2\right\}\sum_{m=0}^\infty\sum_{j=1}^{a_m}\int_\R\int_0^\infty \frac{|f_{m,j}(x,w)|^2x^{n+2m-1}}{\left(\left(\delta+\frac{|x|^2}{2}\right)^2+w^2\right)^s}\,dx\,dw.
\end{align*}
Moreover, we have \[\int_{\R^n} |f(x,w)|^2\,dx = \sum_{m=0}^\infty\sum_{j=1}^{a_m} \int_0^\infty|f_{m,j}(r,w)|^2\,r^{n+2m-1}\,dr,\] and \[\inf_{m\geqslant 0}\left\{(4\delta)^s\left(\frac{\Gamma(\frac{n/2+m+s+1}{2})}{\Gamma(\frac{n/2+m-s+1}{2})}\right)^2\right\} = (4\delta)^s\left(\frac{\Gamma(\frac{n/2+s+1}{2})}{\Gamma(\frac{n/2-s+1}{2})}\right)^2.\]
Therefore, \begin{align*}
\langle\mathcal{G}_s f,f\rangle\geqslant(4\delta)^s\left(\frac{\Gamma(\frac{n/2+s+1}{2})}{\Gamma(\frac{n/2-s+1}{2})}\right)^2\int_{\R}\int_{\R^n} \frac{|f(x,w)|^2}{\left(\left(\delta+\frac{|x|^2}{2}\right)^2+w^2\right)^s}\,dx\,dw.
\end{align*}
Once we proved the inequality for $f\in C_c^\infty(H)$, we argue as in the proof of Theorem \ref{proof of hardy sublaplacian} to conclude that the inequality holds true for $f\in W^{s,2}(H)$.\\
Finally, we show that the constants involved in the inequality are sharp. For $-1<s<1$ and $\delta>0$, define $v_{s,\delta}$ on $H$ by \[v_{s,\delta}(x,w) = \left((\delta+|x|^2/2)^2+w^2\right)^{-\frac{n/2+1+s}{2}}.\] Using Proposition \ref{main_proposition},we have \begin{align*}
\langle \G_s v_{-s,\delta}, v_{-s,\delta}\rangle &= \langle \L_s v_{-s,\delta}, v_{-s,\delta}\rangle\\
&= (4\delta)^s\frac{\Gamma^2(\frac{n/2+s+1}{2})}{\Gamma^2(\frac{n/2-s+1}{2})}\langle v_{s,\delta},v_{-s,\delta}\rangle\\
&= (4\delta)^s\frac{\Gamma^2(\frac{n/2+s+1}{2})}{\Gamma^2(\frac{n/2-s+1}{2})}\int_\R\int_0^\infty \frac{x^{n-1}dx\,dw}{\left(\left(\delta+\frac{x^2}{2}\right)^2+w^2\right)^{n/2+1}}\\
&= (4\delta)^s\frac{\Gamma^2(\frac{n/2+s+1}{2})}{\Gamma^2(\frac{n/2-s+1}{2})}\int_{\R}\int_{\R^n} \frac{|u_{-s,\delta}(x,w)|^2dx\,dw}{\left(\left(\delta+\frac{x^2}{2}\right)^2+w^2\right)^{s}}.\\
\end{align*} Therefore, equality is achieved  for $f = u_{-s,\delta}$. Hence the constants involved in the inequality are sharp.

\end{document}